\documentclass[10pt]{article}
\usepackage[a4paper, total={7in, 9in}]{geometry}
\usepackage{graphicx}
\usepackage{cancel}
\usepackage{authblk}
\usepackage{fancyhdr}
\usepackage{graphicx}
\usepackage{enumerate}
\usepackage{mathtools}
\usepackage{multicol}
\usepackage{amsmath,amssymb}
\usepackage{amsthm}
\usepackage[utf8]{inputenc}
\usepackage[ruled,vlined]{algorithm2e}
\usepackage{subfig}
\usepackage{booktabs}
\usepackage{multirow}
\usepackage[colorlinks=true]{hyperref}

\usepackage{hyperref}

\usepackage{enumitem}

\newcounter{hypo}

\newtheorem{theorem}{Theorem}
\newtheorem{proposition}[theorem]{Proposition}
\newtheorem{lemma}[theorem]{Lemma}
\theoremstyle{definition}
\newtheorem{example}[theorem]{Example}
\newtheorem{remark}[theorem]{Remark}
\newtheorem{hypothesis}{Hypothesis}

\newcommand{\R}{\mathbb{R}}
\newcommand{\N}{\mathbb{N}}
\newcommand{\HH}{\mathcal{H}}
\newcommand\inner[2]{\langle #1, #2 \rangle}

\DeclareMathOperator{\Fix}{Fix}
\DeclareMathOperator{\Zer}{Zer}
\DeclareMathOperator{\prox}{prox}

\DeclareMathOperator{\dist}{dist}

\newcommand{\norm}[1]{\left\lVert#1\right\rVert}
\newcommand{\normc}[1]{\left\lVert#1\right\rVert^2}

\newcommand{\paren}[1]{\left(#1\right)}
\newcommand{\parenc}[1]{\left[#1\right]}
\newcommand{\pint}[2]{\left<#1,#2\right>}
\newcommand{\cali}{\mathcal}

\newcommand{\wto}{\rightharpoonup}

\newcommand{\Rcupinf}{\mathbb{R}\cup\lbrace+\infty\rbrace}

\parindent = 0cm
\topmargin -2.5cm
\textheight 24cm

\title{Inertial Krasnoselskii-Mann Iterations}
\author{Juan José Maulén\thanks{Bernoulli Institute for Mathematics, Computer Science and Artificial Intelligence, University of Groningen \& Center for Mathematical Modelling, Mathematical Engineering Department, University of Chile} \and Ignacio Fierro\thanks{BIOCORE team, Centre INRIA de l’Universit\'e de la C\^ote d’Azur} \and
	Juan Peypouquet\thanks{Bernoulli Institute for Mathematics, Computer Science and Artificial Intelligence, University of Groningen}
}

\begin{document}
	
	\maketitle
	
	\begin{abstract}
	
	We establish the weak convergence of inertial Krasnoselskii-Mann iterations towards a common fixed point of a family of quasi-nonexpansive operators, along with estimates for the non-asymptotic rate at which the residuals vanish. Strong and linear convergence are obtained in the quasi-contractive setting. In both cases, we highlight the relationship with the non-inertial case, and show that passing from one regime to the other is a continuous process in terms of the hypotheses on the parameters. Numerical illustrations are provided for an inertial primal-dual method and an inertial three-operator splitting algorithm, whose performance is superior to that of their non-inertial counterparts.  \\

	\textbf{Keywords} Krasnoselskii-Mann iterations $\cdot$ Fixed points $\cdot$ Nonexpansive operators $\cdot$ Monotone inclusions $\cdot$ Convex optimization $\cdot$ Inertial methods $\cdot$ Acceleration \\
	
	\textbf{Mathematics Subject Classification (2020) } 47H05 $\cdot$ 47H10 $\cdot$ 65K05 $\cdot$ 90C25

	
\end{abstract}

	\section{Introduction}

Krasnoselskii-Mann (KM) iterations \cite{Krasnosel1955two,mann1953mean} are at the core of numerical methods used in optimization, fixed point theory and variational analysis, since they include many fundamental splitting algorithms whose convergence can be analyzed in a unified manner. These include the {\it forward-backward} \cite{LionsMercier,Passty} to approximate a zero of the sum of two maximally monotone operators, and its various particular instances: on the one hand, we have the {\it gradient projection} algorithm \cite{goldstein1964convex,levitin1966constrained}, the gradient method \cite{cauchy1847methode} and the proximal point algorithm
\cite{martinet1970regularisation,rockafellar1976monotone,brezis1978produits,guler1991convergence}, to cite some abstract methods, as well as the {\it Iterative Shrinkage-Thresholding Algorithm} (ISTA) \cite{daubechies2004iterative,combettes2005signal}, to speak more concretely. KM iterations also encompass other splitting methods like  {\it Douglas-Rachford} \cite{DouglasRachford1956}, primal-dual methods \cite{chambolle2011first,attouch2010parallel,combettes2004solving,vu2013splitting,condat2013primal} and the three-operator splitting \cite{davis2017three}.  \\


In convex optimization, first order methods can be enhanced by adding an inertial substep, motivated by physical considerations \cite{polyakInertial,Nesterov1983AMF,alvarez_minimizing_2000}. To our knowledge, the first extensions beyond the optimization setting was developed in \cite{alvarez2001inertial}, followed by \cite{mainge2008convergence,Lorenz2014,moudafi2003convergence}  some years later. The main drawback of the previous results is that they require an implicit hypothesis on the sequence generated by the algorithm (the summability of a certain series) to ensure its convergence. In \cite{alvarez2001inertial}, however, this difficulty is overcome, in some special cases and for small values of the inertial parameters. These ideas were also used in \cite{boct2015inertial}, and then improved in \cite{dong2021new}, by adapting the inertial factors to the relaxation ones (see below). A similar principle had been used in \cite{AttouchCabot2019}, whose analysis was based on \cite{attouch2019convergence}. Nonasymptotic convergence rates for the residuals have been given in \cite{shehu2018convergence,iyiola2021new}. Strong and linear convergence can be found in \cite{shehu2020inertial}, for strictly contractive {\it forward-projection} operators. Other extensions have been considered in \cite{dong2015accelerated,combettes2017quasi,moudafi2018reflected,dong2018general}. See also \cite{dong2022krasnosel} for a more thorough account of KM iterations, with and without inertia. Interest in this type of methods increased remarkably in the past decade in view of theoretical advances in the convergence theory for the {\it Fast Iterative Shrinkage-Thresholding Algorithm} (FISTA) \cite{beck2009fast}, obtained in \cite{ChambolleDossal2015,AttPeyRedMAPR2018,AttPey1_k2}. \\

The purpose of this work is to develop further insight into the convergence properties of {\it inertial Krasnoselskii-Mann iterations} in their general form
\begin{equation}\label{E:Algorithm}
	\left\{
	\begin{array}{rcl}
		y_k & = & x_k+\alpha_k(x_k-x_{k-1})\\
		x_{k+1} & = & (1-\lambda_k)y_k+\lambda_kT_ky_k,
	\end{array}
	\right.
\end{equation} 
where $(T_k)$ is a family of operators defined on a real Hilbert space $\HH$, and the positive sequences $(\alpha_k)$ and $(\lambda_k)$ are the {\it inertial} and {\it relaxation} (or {\it averaging}) parameters, respectively. 

\begin{remark}
	To fix the ideas, suppose $\inf_{k\ge 1}\lambda_k>0$, $(\alpha_k)$ is bounded, and $T_k\equiv T$, where $T$ is continuous. If $x_k$ happens to converge to a point $\bar x$, then the {\it residual} $\|Tx_k-x_k\|$ goes to zero, and $\bar x$ is a fixed point of $T$.
\end{remark}

Our general aim is to provide conditions on the parameter sequences and the family of operators to ensure that the sequences generated by \eqref{E:Algorithm} converges (weakly or strongly) to a common fixed point of the $T_k$'s, provided there are any. More specifically, we mean to establish a setting, which is as general as possible, but such that (1) the hypotheses are interpretable and verifiable; (2) the proofs are transparent and mostly elementary; and (3) the convergence results are quantifiable in terms of appropriate sequences. We shall also see that adding the inertial term does not always make algorithms faster (this is reflected in the worst-case convergence rates), but may boost their convergence in some relevant instances. Another interesting line of research consists in identifying the combination of parameters for which the algorithm has its best numerical performance. Although we consider this highly relevant, we shall not pursue that direction here. \\

The paper is organized as follows: in Section \ref{section:weak} we establish  the weak convergence of the iterations towards a common fixed point of the family of operators in the quasi-nonexpansive case, along with a non-asymptotic rate at which the residuals vanish. Section \ref{section:strong} is devoted to the strong and linear convergence in the quasi-contractive setting. In both cases, we highlight the relationship with the non-inertial case, and show that passing from one regime to the other is a continuous process in terms of parameter hypotheses and convergence rates. In Section \ref{section:examples}, we discuss several instances of KM iterations, which are relevant to the numerical illustrations provided in Section \ref{section:numerical}, concerning an inertial primal-dual method and an inertial three-operator splitting algorithm.  

	\section{Vanishing residuals and weak convergence}\label{section:weak}

An operator $T:\HH\to \HH$ is {\it quasi-nonexpansive} if $\Fix(T)\neq\emptyset$ and $\|Ty-p\|\le\|y-p\|$ for all $y\in \HH$ and $p\in\Fix(T)$. This implies, in particular, that
\begin{equation} \label{E:quasi-cocoercive}
	2\pint{y-p}{Ty-y}\le-\|Ty-y\|^2
\end{equation} 
for all $y\in \HH$ and $p\in\Fix(T)$. \\

In this section, we consider a family $(T_k)$ of quasi-nonexpansive operators on $\HH$, with $F:=\bigcap_{k\ge 1}\Fix(T_k)\neq\emptyset$, along with a sequence $(x_k,y_k)$ satisfying \eqref{E:Algorithm}, where $(\alpha_k)$ is a nondecreasing sequence\footnote{This is just to simplify the proof and is sufficiently general for practical purposes.} in $[0,1)$, and $(\lambda_k)$ is a sequence in $(0,1)$ such that $\inf_{k\ge 1}\lambda_k>0$. \\

To simplify the notation, given $p\in F$, we set
\begin{equation} \label{E:def_C_Delta_delta}
	\left\{\begin{array}{ccl} 
		\nu_k & = & \left(\lambda_{k}^{-1}-1\right)
		\smallskip  \\
		\delta_k & = & \nu_{k-1}(1-\alpha_{k-1})\|x_k-x_{k-1}\|^2, \smallskip  \\
		\Delta_k(p) & = & \|x_k-p\|^2-\|x_{k-1}-p\|^2,\quad\Delta_1(p)=0 \smallskip \\
		C_k(p) & = & \|x_k-p\|^2-\alpha_{k-1}\|x_{k-1}-p\|^2+\delta_k,\quad C_1(p)=\|x_1-p\|^2.
	\end{array} \right.
\end{equation} 

At different points, and in order to simplify the computations, we shall make use of a basic property of the norm in $\HH$: for every $x,y\in\HH$ and $\alpha \in [0,1]$, we have 
\begin{equation}\label{eq:id_new}
	\norm{\alpha x + (1-\alpha)y}^2 = \alpha \norm{x}^2 + (1-\alpha)\norm{y}^2 - \alpha(1-\alpha)\norm{x-y}^2.    
\end{equation}

The following auxiliary result will be useful in the sequel:

\begin{lemma}\label{L:1}
	Let $(T_k)$ be a family of quasi-nonexpansive operators on $\HH$, with $F:=\bigcap_{k\ge 1}\Fix(T_k)\neq\emptyset$, and let $(x_k,y_k)$ satisfy \eqref{E:Algorithm}. For each $k\ge 1$ and $p\in F$, we have
	\begin{equation} \label{E:proof5}
		\Delta_{k+1}(p)+\delta_{k+1}+\nu_k\alpha_k\|x_{k+1}-2x_k+x_{k-1}\|^2 \le \alpha_k\Delta_k(p) + \big[\alpha_k(1+\alpha_k)+\nu_k\alpha_k(1-\alpha_k)\big]\|x_k-x_{k-1}\|^2.
	\end{equation}
\end{lemma}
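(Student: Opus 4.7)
The plan is to begin from the quasi-nonexpansiveness inequality \eqref{E:quasi-cocoercive} applied to $T_k$ at the point $y_k$, and translate it into a Féjer-style estimate for $x_{k+1}$. Since \eqref{E:Algorithm} gives $x_{k+1}-y_k=\lambda_k(T_ky_k-y_k)$, substituting $T_ky_k-y_k=\lambda_k^{-1}(x_{k+1}-y_k)$ into \eqref{E:quasi-cocoercive} and multiplying by $\lambda_k$ yields $2\langle y_k-p,x_{k+1}-y_k\rangle\le -\lambda_k^{-1}\|x_{k+1}-y_k\|^2$. Expanding $\|x_{k+1}-p\|^2=\|y_k-p\|^2+2\langle y_k-p,x_{k+1}-y_k\rangle+\|x_{k+1}-y_k\|^2$ then gives
\[
\|x_{k+1}-p\|^2 \le \|y_k-p\|^2-\nu_k\|x_{k+1}-y_k\|^2.
\]

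Next, I would expand $\|y_k-p\|^2$ using $y_k=(1+\alpha_k)x_k-\alpha_k x_{k-1}$ and the identity $\|(1-\theta)u+\theta v\|^2=(1-\theta)\|u\|^2+\theta\|v\|^2-\theta(1-\theta)\|u-v\|^2$, which holds for any real $\theta$. Taking $\theta=-\alpha_k$, $u=x_k-p$, $v=x_{k-1}-p$ produces
\[
\|y_k-p\|^2=(1+\alpha_k)\|x_k-p\|^2-\alpha_k\|x_{k-1}-p\|^2+\alpha_k(1+\alpha_k)\|x_k-x_{k-1}\|^2.
\]
Subtracting $\|x_k-p\|^2$ from both sides and using the definition of $\Delta_{k+1}(p)$ and $\Delta_k(p)$, I obtain the preliminary bound
\[
\Delta_{k+1}(p)\le\alpha_k\Delta_k(p)+\alpha_k(1+\alpha_k)\|x_k-x_{k-1}\|^2-\nu_k\|x_{k+1}-y_k\|^2.
\]

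The step that actually carries the content of the lemma is decomposing the residual $x_{k+1}-y_k=(x_{k+1}-x_k)-\alpha_k(x_k-x_{k-1})$ so as to isolate the discrete acceleration $x_{k+1}-2x_k+x_{k-1}$. Writing $a=x_{k+1}-x_k$, $b=x_k-x_{k-1}$ and expanding, one checks the identity
\[
\|a-\alpha_k b\|^2=\alpha_k\|a-b\|^2+(1-\alpha_k)\|a\|^2-\alpha_k(1-\alpha_k)\|b\|^2,
\]
equivalently
\[
\|x_{k+1}-y_k\|^2=\alpha_k\|x_{k+1}-2x_k+x_{k-1}\|^2+(1-\alpha_k)\|x_{k+1}-x_k\|^2-\alpha_k(1-\alpha_k)\|x_k-x_{k-1}\|^2.
\]

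Finally, I plug this decomposition back into the preliminary bound, recognize that $\nu_k(1-\alpha_k)\|x_{k+1}-x_k\|^2$ is precisely $\delta_{k+1}$ by \eqref{E:def_C_Delta_delta}, and collect the remaining $\|x_k-x_{k-1}\|^2$ terms into the coefficient $\alpha_k(1+\alpha_k)+\nu_k\alpha_k(1-\alpha_k)$ announced in \eqref{E:proof5}. The only genuine obstacle is guessing the right way to split $\|x_{k+1}-y_k\|^2$; everything else is bookkeeping with the identities \eqref{E:def_C_Delta_delta}.
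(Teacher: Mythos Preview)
Your proof is correct and follows essentially the same route as the paper's: the paper derives $\|x_{k+1}-p\|^2\le\|y_k-p\|^2-\lambda_k(1-\lambda_k)\|y_k-T_ky_k\|^2$, expands $\|y_k-p\|^2$ identically, and then decomposes $\lambda_k^2\|y_k-T_ky_k\|^2$ exactly as you decompose $\|x_{k+1}-y_k\|^2$ (these quantities coincide). The only difference is cosmetic---you carry the residual as $\|x_{k+1}-y_k\|^2$ throughout and substitute directly, whereas the paper keeps $\|y_k-T_ky_k\|^2$, rewrites the decomposition as the separate identity \eqref{E:proof34}, and then adds it to \eqref{E:proof3}.
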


\begin{proof}
	Take $p\in F$. From \eqref{E:Algorithm}, it follows that
	\begin{equation} \label{E:proof1}
		\|x_{k+1}-p\|^2 = \|y_k-p\|^2 +\lambda_k^2\|y_k-T_ky_k\|^2+2\lambda_k\pint{y_k-p}{T_ky_k-y_k} \le \|y_k-p\|^2 -\lambda_k(1-\lambda_k)\|y_k-T_ky_k\|^2,
	\end{equation} 
	where the inequality is given by \eqref{E:quasi-cocoercive}. Notice that
	$$\|y_k-p\|^2 = \|(1+\alpha_k)(x_k-p) - \alpha_k(x_{k-1}-p)\|^2, $$
	and using \eqref{eq:id_new} we get
	\begin{equation} \label{E:proof2}
		\|y_k-p\|^2 = (1+\alpha_k)\|x_k-p\|^2+\alpha_k(1+\alpha_k)\|x_k-x_{k-1}\|^2-\alpha_k\|x_{k-1}-p\|^2.
	\end{equation} 
	By combining expressions \eqref{E:proof1} and \eqref{E:proof2}, we obtain
	$$
	\|x_{k+1}-p\|^2 \le (1+\alpha_k)\|x_k-p\|^2+\alpha_k(1+\alpha_k)\|x_k-x_{k-1}\|^2 -\alpha_{k}\|x_{k-1}-p\|^2-\lambda_k(1-\lambda_k)\|y_k-T_ky_k\|^2.
	$$
	Recalling from \eqref{E:def_C_Delta_delta} that $\Delta_k(p)=\|x_k-p\|^2-\|x_{k-1}-p\|^2$, we rewrite the latter as
	\begin{equation} \label{E:proof3}
		\Delta_{k+1}(p) \le \alpha_k\Delta_k(p) + \alpha_k(1+\alpha_k)\|x_k-x_{k-1}\|^2 -\lambda_k(1-\lambda_k)\|y_k-T_ky_k\|^2.
	\end{equation}
	Notice that 
	\begin{equation} \label{E:proof3.5}
		\lambda_k^2\|y_k-T_ky_k\|^2 = \|x_{k+1}-x_k - \alpha_k(x_k - x_{k-1})\|^2 = \norm{(1-\alpha_k)(x_{k+1} - x_k) + \alpha_k (x_{k+1}-2x_k + x_{k-1})}^2,
	\end{equation}
	and using \eqref{eq:id_new} gives
	\begin{equation} \label{E:proof3.5bis}
		\lambda_k^2\|y_k-T_ky_k\|^2 = (1-\alpha_k)\|x_{k+1}-x_k\|^2-\alpha_k(1-\alpha_k)\|x_k-x_{k-1}\|^2+\alpha_k\|x_{k+1}-2x_k+x_{k-1}\|^2.
	\end{equation}
	By multiplying the latter by $\nu_k=(1-\lambda_k)/\lambda_k$, and using the definition of $\delta_k$ in \eqref{E:def_C_Delta_delta}, we rewrite this as
	\begin{equation} \label{E:proof34}
		\delta_{k+1}+\nu_k\alpha_k\|x_{k+1}-2x_k+x_{k-1}\|^2=\nu_k\alpha_k(1-\alpha_k)\|x_k-x_{k-1}\|^2 +\lambda_k(1-\lambda_k)\|y_k-T_ky_k\|^2.
	\end{equation}
	Summing \eqref{E:proof3} and \eqref{E:proof34}, we obtain \eqref{E:proof5}.
\end{proof}

We are now in a position to show that the sequence $(x_n)$ remains anchored to the set $F$, while both the residuals $\|y_k-T_ky_k\|$ and the speed $\|x_k-x_{k-1}\|$ tend to 0. We shall make some assumptions on the parameter sequences $(\alpha_k)$ and $(\lambda_k)$.

\begin{hypothesis} \label{H:1}
	There is $k_0$ such that $\alpha_k(1+\alpha_k)+(\lambda_k^{-1}-1)\alpha_k(1-\alpha_k)-(\lambda_{k-1}^{-1}-1)(1-\alpha_{k-1}) \le 0$ for all $k\ge k_0$.
\end{hypothesis}

A reinforced version with strict inequality is given by:

\begin{hypothesis} \label{H:2}
	$\limsup_{k\to\infty}\big[\alpha_k(1+\alpha_k)+(\lambda_k^{-1}-1)\alpha_k(1-\alpha_k)-(\lambda_{k-1}^{-1}-1)(1-\alpha_{k-1})\big]<0$.
\end{hypothesis}

\begin{remark} \label{R:hypotheses}
	With Hypothesis \ref{H:1} or \ref{H:2}, there exist $\varepsilon\ge 0$ and $k_0\ge 1$ such that	
	\begin{equation} \label{E:H:1_bis}
		\alpha_k(1+\alpha_k)+(\lambda_k^{-1}-1)\alpha_k(1-\alpha_k) \le (\lambda_{k-1}^{-1}-1)(1-\alpha_{k-1})-\varepsilon
	\end{equation}	
	for all $k\ge k_0$ (if Hypothesis \ref{H:2} holds, then $\varepsilon>0$; otherwise, $\varepsilon=0$). Also, under Hypothesis \ref{H:2}, $\alpha:=\sup_{k\ge 1}\alpha_k<1$ and $\lambda:=\inf_{k\ge 1}\lambda_k>0$.
\end{remark}

\begin{theorem} \label{T:1}
	Let $(T_k)$ be a family of quasi-nonexpansive operators on $\HH$, and let $(x_k,y_k)$ satisfy \eqref{E:Algorithm}. Suppose that the set $F=\bigcap_{k\ge 1}\Fix(T_k)$ is nonempty.
	\begin{itemize}
		\item [i)] If Hypothesis \ref{H:1} holds, for every $p\in F$, the sequence $\big(C_k(p)\big)_{k\ge k_0}$ is nonincreasing and nonnegative, thus $\lim\limits_{k\to\infty}C_k(p)$ exists.
		\item [ii)] If Hypothesis \ref{H:2} holds, the series $\sum\limits_{k\ge 1}\|x_{k+1}-2x_k+x_{k-1}\|^2$, $\sum\limits_{k\ge 1}\|x_k-x_{k-1}\|^2$, $\sum\limits_{k\ge 1}\delta_k$  and $\sum\limits_{k\ge 1}\|y_k-T_ky_k\|^2$ are convergent, and there is a constant $M>0$, depending only on $(\alpha_k)$ and $(\lambda_k)$, such that
		\begin{equation} \label{E:nonasymptotic}
			\min\limits_{1\le k\le n}\|y_k-T_ky_k\|^2\le\frac{M\dist(x_1,F)^2}{n}.    
		\end{equation}
		Moreover, for each $p\in F$, $\lim\limits_{k\to\infty}\|x_k-p\|$ exists.
	\end{itemize}
\end{theorem}

\begin{proof}
	Without any loss of generality, we may assume that \eqref{E:H:1_bis} holds with $k_0=1$. Take any $p\in F$, and combine \eqref{E:H:1_bis} with \eqref{E:proof5}, to obtain
	\begin{eqnarray} 
		\Delta_{k+1}(p)+\delta_{k+1}+\nu_k\alpha_k\|x_{k+1}-2x_k+x_{k-1}\|^2 & \le & \alpha_k\Delta_k(p) + \big[\nu_{k-1}(1-\alpha_{k-1})-\varepsilon\big]\|x_k-x_{k-1}\|^2 \nonumber \\
		& = & \alpha_k\Delta_k(p) +\delta_k-\varepsilon\|x_k-x_{k-1}\|^2. \label{E:proof6}
	\end{eqnarray}
	On the one hand, \eqref{E:proof6} immediately gives 
	\begin{equation} \label{E:L:1_1}
		\Delta_{k+1}(p)  \le  \alpha_k\Delta_k(p)+\delta_k. 
	\end{equation}
	On the other, since $(\alpha_k)$ is nondecreasing, we have 
	\begin{eqnarray*}
		C_{k+1}(p)-C_k(p) = \Delta_{k+1}(p)-\big(\alpha_k\|x_k-p\|^2-\alpha_{k-1}\|x_{k-1}-p\|^2\big) +\delta_{k+1}-\delta_k \le \Delta_{k+1}(p)+\delta_{k+1}-\alpha_k\Delta_k(p) -\delta_k.
	\end{eqnarray*}
	Therefore, \eqref{E:proof6} implies
	\begin{equation} \label{E:L:1_2}
		C_{k+1}(p)+\nu_k\alpha_k\|x_{k+1}-2x_k+x_{k-1}\|^2+\varepsilon\|x_k-x_{k-1}\|^2 \le C_k(p).
	\end{equation}
	It ensues that $\big(C_k(p)\big)$ is nonincreasing. To show that it is nonnegative, suppose that $C_{k_1}(p)<0$ for some $k_1\ge 1$. Since $\big(C_k(p)\big)$ is nonincreasing, 
	$$\|x_k-p\|^2-\alpha_{k-1}\|x_{k-1}-p\|^2\le C_k(p)\le C_{k_1}(p)<0$$ 
	for all $k\ge k_1$. If follows that $\|x_k-p\|^2 \le \|x_{k-1}-p\|^2+ C_{k_1}(p)$, and so
	$$0\le \|x_k-p\|^2\le\|x_{k-1}-p\|^2+C_{k_1}(p) \le \dots \le \|x_{k_1}-p\|^2+(k-k_1)C_{k_1}(p)$$
	for all $k\ge k_1$, which is impossible. As a consequence $\big(C_k(p)\big)$ is nonnegative, and $\lim\limits_{k\to\infty}C_k(p)$ exists. \\
	For ii), Inequality \eqref{E:H:1_bis} holds with $\varepsilon>0$. The summability of the first two series follows from \eqref{E:L:1_2}. In particular,
	\begin{equation} \label{E:L:1_2bis}
		\varepsilon\sum_{k\ge 1}\|x_k-x_{k-1}\|^2 \le C_1(p)=\|x_1-p\|^2.
	\end{equation}
	The third one is a consequence of the second one, since $\lambda:=\inf_{k\ge 1}\lambda_k>0$. 
	For the last one, use \eqref{E:proof3.5bis} to write
	$$\lambda_k^2\|y_k-T_ky_k\|^2\le (1+\alpha)\|x_{k+1}-x_k\|^2+\alpha(1+\alpha)\|x_k-x_{k-1}\|^2.$$
	In view of \eqref{E:L:1_2bis}, this gives the summability of the fourth series, with
	$$n\min\limits_{1\le k\le n}\|y_k-T_ky_k\|^2\le\sum_{k\ge 1}\|y_k-T_ky_k\|^2 \le \frac{(1+\alpha)^2}{\varepsilon\lambda^2}\|x_1-p\|^2.$$
	Since this holds for each $p\in F$, we obtain 
	\eqref{E:nonasymptotic} with $M=\frac{(1+\alpha)^2}{\varepsilon\lambda^2}$. Now, denoting the positive part of $d\in\R$ by $[d]_+$, we obtain from \eqref{E:L:1_1} that
	$$(1-\alpha)\big[\Delta_{k+1}(p)\big]_++\alpha\big[\Delta_{k+1}(p)\big]_+ \le \alpha\big[\Delta_k(p)\big]_++\delta_k.$$
	Summing for $k\ge 1$, we obtain
	$$(1-\alpha)\sum_{k\ge 1}\big[\Delta_{k+1}(p)\big]_+\le \alpha\big[\Delta_1(p)\big]_++\sum_{k\ge 1}\delta_k=\sum_{k\ge 1}\delta_k<\infty.$$

	By writing $h_k=\|x_k-p\|^2-\sum_{j=1}^k\big[\Delta_{j}(p)\big]_+$, we get $h_{k+1}-h_k=\Delta_{k+1}(p)-\big[\Delta_{k+1}(p)\big]_+\le 0$, from which we conclude that $\lim\limits_{k\to\infty}\|x_k-p\|=\lim\limits_{k\to\infty}h_k$ exists. 

\end{proof}

\begin{remark} \label{R:1}
	Hypotheses \ref{H:1} and \ref{H:2} are closely related, but different, from the hypotheses used in \cite{AttouchCabot2019} for forward-backward iterations. In the non-inertial case $\alpha=0$, Hypothesis \ref{H:1} is just $\limsup_{k\to\infty}\lambda_k<1$. On the other hand, since $(\alpha_k)$ is nondecreasing and bounded, we have $\alpha_k\to\alpha\in[0,1]$. If $\lambda_k\to \lambda$, then Hypothesis \ref{H:2} is reduced to
	\begin{equation} \label{E:H2_constant}
		\lambda(1-\alpha+2\alpha^2)<(1-\alpha)^2.
	\end{equation} 
	For each $\alpha\in[0,1)$, there is $\lambda_\alpha>0$ such that \eqref{E:H2_constant} holds for all $\lambda<\lambda_\alpha$.
\end{remark}

In order to prove the weak convergence of the sequences generated by Algorithm \eqref{E:Algorithm}, we shall use the following nonautonomous extension of the concept of demiclosedness. \\

The family of operators $(I-T_k)$ is {\it asymptotically demiclosed at $0$} if for every sequence $(z_k)$ such that $z_k\wto z$ and $z_k-T_kz_k\to 0$, we must have $z\in F=\bigcap_{k\ge 1}\Fix(T_k)$. \\

Of course, if $T:\HH\to \HH$ is nonexpansive and $T_k\equiv T$, then $I-T_k$ is asymptotically demiclosed at $0$. We shall discuss other examples in the next section.

\begin{theorem} \label{T:1bis}
	Let $(T_k)$ be a family of quasi-nonexpansive operators on $\HH$, with $F=\bigcap_{k\ge 1}\Fix(T_k)\neq\emptyset$. Let $(x_k,y_k)$ satisfy \eqref{E:Algorithm}, and assume Hypotheses \ref{H:2} holds. If $(I-T_k)$ is asymptotically demiclosed at $0$, then both $x_k$ and $y_k$ converge weakly, as $k\to\infty$, to a point in $F$.
\end{theorem}

\begin{proof}
	Recall that $\lim\limits_{k\to\infty}\|y_k-T_ky_k\|=\lim\limits_{k\to\infty}\|x_k-x_{k-1}\|=0$, by part ii) of Theorem \ref{T:1}. From \eqref{E:Algorithm}, we deduce that $(y_k)$ and $(x_k)$ have the same (weak and strong) limit points. Suppose $x_{n_k}\wto x$. Then, $y_{n_k}\wto x$ as well. Since $y_{n_k}-T_ky_{n_k}\to 0$, the asymptotic demiclosedness implies $x\in F$. Opial's Lemma \cite{opial1967weak} (see, for instance, \cite[Lemma 5.2]{peypouquet2015convex}) yields the conclusion. 
\end{proof}

\section{Strong and linear convergence}\label{section:strong}

We now focus on the strong convergence of the sequences generated by \eqref{E:Algorithm}, and their convergence rate. As before, we assume that $(\alpha_k)$ is nondecreasing but we do not assume, in principle, that $\inf_{k\ge 1}\lambda_k>0$. \\

Given $q\in(0,1)$, an operator $T:\HH\to \HH$ is $q$-{\it quasi-contractive} if $\Fix(T)\neq\emptyset$ and $\|Ty-p\|\le q\|y-p\|$ for all $y\in \HH$ and $p\in\Fix(T)$. If $T$ is $q$-quasi-contractive, then $\Fix(T)=\{p^*\}$.

Given $\lambda,q\in(0,1)$ and $\xi\in[0,1]$, we define    \begin{equation} \label{E:Q}
	Q(\lambda,q,\xi):=\xi\big(1-\lambda +\lambda q^2\big)+(1-\xi)(1-\lambda+\lambda q)^2=(1-\lambda+\lambda q)^2+\xi\lambda(1-\lambda)(1-q)^2.
\end{equation}

Notice that $Q(\lambda,q,\xi)\in (0,1)$, and that it decreases as $\lambda$ increases, or as either $q$ or $\xi$ decreases. The quantity $Q(\lambda,q,\xi)$ will play a crucial role in the linear convergence rate of the sequences satisfying \eqref{E:Algorithm}. The inclusion of the auxiliary parameter $\xi$ will also allow us to establish convergence rates, with and without inertia, in a unified manner (see the discussion in Subsection \ref{SS:remarks}).	\\

The following result establishes a bound on the distance to a solution after performing a standard KM step:

\begin{lemma}
	Let $T:\HH\to \HH$ be $q$-quasi-contractive with fixed point $p^*$, and let $x,y\in \HH$ and $\lambda>0$ be such that $x=(1-\lambda)y+\lambda Ty$. Then, for each $\xi\in[0,1]$, we have
	\begin{equation} \label{E:Lemma2_1}
		\|x-p^*\|^2 \le Q(\lambda,q,\xi)\|y-p^*\|^2-\xi\lambda(1-\lambda)\|Ty-y\|^2.
	\end{equation}
\end{lemma}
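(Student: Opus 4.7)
The plan is to establish two separate upper bounds on $\|x-p^*\|^2$ — one that retains a favorable $-\lambda(1-\lambda)\|Ty-y\|^2$ term, and another that produces a tighter $(1-\lambda+\lambda q)^2$ coefficient on $\|y-p^*\|^2$ — and then take a convex combination of the two with weights $\xi$ and $1-\xi$.

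For the first bound I apply the algebraic identity
$$\|t u+(1-t)v\|^2 = t\|u\|^2+(1-t)\|v\|^2-t(1-t)\|u-v\|^2$$
to the decomposition $x-p^*=(1-\lambda)(y-p^*)+\lambda(Ty-p^*)$, with $t=1-\lambda$. Combined with the quasi-contractivity estimate $\|Ty-p^*\|^2\le q^2\|y-p^*\|^2$, this yields
$$\|x-p^*\|^2 \le (1-\lambda+\lambda q^2)\|y-p^*\|^2 - \lambda(1-\lambda)\|Ty-y\|^2,$$
which is precisely the $\xi=1$ instance of the target inequality. For the second bound, the triangle inequality followed by quasi-contractivity gives $\|x-p^*\|\le (1-\lambda)\|y-p^*\|+\lambda q\|y-p^*\|=(1-\lambda+\lambda q)\|y-p^*\|$, and squaring produces
$$\|x-p^*\|^2 \le (1-\lambda+\lambda q)^2\|y-p^*\|^2,$$
which is the $\xi=0$ instance.

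To conclude, I split $\|x-p^*\|^2=\xi\|x-p^*\|^2+(1-\xi)\|x-p^*\|^2$, apply the first bound to the $\xi$-piece and the second bound to the $(1-\xi)$-piece, and collect. The coefficient of $\|y-p^*\|^2$ that emerges is exactly $\xi(1-\lambda+\lambda q^2)+(1-\xi)(1-\lambda+\lambda q)^2=Q(\lambda,q,\xi)$, and the residual term carries the prefactor $-\xi\lambda(1-\lambda)$, matching \eqref{E:Lemma2_1}. No genuine obstacle arises; the only piece of bookkeeping is to check the alternative form of $Q$ announced in \eqref{E:Q}, which reduces to the elementary identity $(1-\lambda+\lambda q^2)-(1-\lambda+\lambda q)^2=\lambda(1-\lambda)(1-q)^2$.
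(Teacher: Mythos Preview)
Your proof is correct and follows essentially the same approach as the paper: derive the $\xi=1$ bound via the identity $\|(1-\lambda)u+\lambda v\|^2=(1-\lambda)\|u\|^2+\lambda\|v\|^2-\lambda(1-\lambda)\|u-v\|^2$ together with $\|Ty-p^*\|\le q\|y-p^*\|$, derive the $\xi=0$ bound from the triangle inequality and quasi-contractivity, then take the convex combination. The only cosmetic difference is that the paper re-derives the convex-combination identity by expanding two squares and adding, whereas you invoke it directly.
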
	

\begin{proof}
	Notice that 
	$$\norm{x-p^*} = \norm{(1-\lambda)(y-p^*) + \lambda(Ty - p^*)}.$$
	Then, using \eqref{eq:id_new}, we get
	\begin{eqnarray} \label{E:Lemma2_P1}
		\|x-p^*\|^2 & = & (1-\lambda)\|y-p^*\|^2+\lambda\|Ty-p^*\|^2-\lambda(1-\lambda)\|Ty-y\|^2 \nonumber\\
		& \le &  \big(1-\lambda +\lambda q^2\big)\|y-p^*\|^2-\lambda(1-\lambda)\|Ty-y\|^2.
	\end{eqnarray}
	On the other hand, we have
	\begin{equation} \label{E:Lemma2_P2}
		\|x-p^*\|\le(1-\lambda)\|y-p^*\|+\lambda\|Ty-p^*\|\le (1-\lambda+\lambda q)\|y-p^*\|.
	\end{equation}
	Then, inequality \eqref{E:Lemma2_1} is just a convex combination of \eqref{E:Lemma2_P1} and the square of \eqref{E:Lemma2_P2}.
\end{proof}	

\subsection{Convergence analysis}

We now turn to the convergence of the sequences verifying \eqref{E:Algorithm}.	To simplify the notation, for each $k\in\N$, we set
$$\tilde C_k(p)  =  \|x_k-p^*\|^2-\alpha_{k-1}\|x_{k-1}-p^*\|^2+\xi\delta_k\quad\hbox{with}\quad \tilde C_1(p^*)=\|x_1-p^*\|^2.$$

We have the following:

\begin{proposition}
	Let $(T_k)$ be a sequence of operators on $\HH$, such that $\Fix(T_k)\equiv \{p^*\}$ and $T_k$ is $q_k$-quasi-contractive for each $k\in\N$. Let $(x_k,y_k)$ satisfy \eqref{E:Algorithm}, and let $\xi\in [0,1]$. Write $Q_k=Q(\lambda_k,q_k,\xi)$, where $Q$ is defined in \eqref{E:Q}. For each $k\in\N$, we have
	\begin{eqnarray} \label{E:Lemma3}
		\|x_{k+1}-p^*\|^2 +\xi\delta_{k+1} & \le & Q_k\left[ (1+\alpha_k)\|x_k-p^*\|^2-\alpha_k\|x_{k-1}-p^*\|^2\right] \nonumber \medskip \\
		&&\, +\big[Q_k\alpha_k(1+\alpha_k)+\xi\nu_k\alpha_k(1-\alpha_k)\big]\|x_k-x_{k-1}\|^2.
	\end{eqnarray}  
	If, moreover,
	\begin{equation}\label{E:H:2}
		Q_k\alpha_k(1+\alpha_k)+\xi\nu_k\alpha_k(1-\alpha_k)-\xi Q_k\nu_{k-1}(1-\alpha_{k-1})\le 0
	\end{equation}
	for all $k\in\N$, then
	\begin{equation} \label{E:Lemma3_1}
		\tilde C_{k+1}(p^*)\le \left[\prod_{j=1}^{k}Q_j\right]\|x_1-p^*\|^2
	\end{equation}
	and
	\begin{equation} \label{E:Lemma3_2}
		\|x_{k+1}-p^*\|^2 \le \left[\alpha^k
		+ \sum_{j=1}^k\alpha^{k-j}\left[\prod_{i=1}^{j}Q_i\right]
		\right]\|x_1-p^*\|^2.
	\end{equation}
\end{proposition}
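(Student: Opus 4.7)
The plan is to apply the preceding lemma to $y=y_k$ and $x=x_{k+1}=(1-\lambda_k)y_k+\lambda_k T_k y_k$, with $q=q_k$, and then combine the resulting bound with the inertial expansion \eqref{E:proof2} of $\|y_k-p^*\|^2$ and the identity \eqref{E:proof34} from the proof of Lemma \ref{L:1}. This produces \eqref{E:Lemma3} essentially by cancellation of the $\|y_k-T_ky_k\|^2$ terms. Once \eqref{E:Lemma3} is in hand, hypothesis \eqref{E:H:2} turns it into a contractive recursion for the Lyapunov quantity $\tilde C_k(p^*)$, which telescopes to give \eqref{E:Lemma3_1}; dropping the nonnegative $\xi\delta_{k+1}$ and invoking $\alpha_k\le\alpha:=\sup_k\alpha_k$ then gives a linear recurrence that unrolls to \eqref{E:Lemma3_2}.

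Concretely, for \eqref{E:Lemma3} I would start from the lemma's bound $\|x_{k+1}-p^*\|^2 \le Q_k\|y_k-p^*\|^2-\xi\lambda_k(1-\lambda_k)\|y_k-T_ky_k\|^2$, substitute \eqref{E:proof2} into the first term on the right, and then add $\xi$ times the identity \eqref{E:proof34} to both sides. The $\xi\lambda_k(1-\lambda_k)\|y_k-T_ky_k\|^2$ contributions cancel, and discarding the nonnegative $\xi\nu_k\alpha_k\|x_{k+1}-2x_k+x_{k-1}\|^2$ on the left yields exactly \eqref{E:Lemma3}.

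Under \eqref{E:H:2}, the coefficient $Q_k\alpha_k(1+\alpha_k)+\xi\nu_k\alpha_k(1-\alpha_k)$ of $\|x_k-x_{k-1}\|^2$ in \eqref{E:Lemma3} is at most $\xi Q_k\nu_{k-1}(1-\alpha_{k-1})$, which by definition of $\delta_k$ equals $Q_k\xi\delta_k/\|x_k-x_{k-1}\|^2\cdot \|x_k-x_{k-1}\|^2=Q_k\xi\delta_k$. The right-hand side of \eqref{E:Lemma3} thus becomes $Q_k\bigl[(1+\alpha_k)\|x_k-p^*\|^2-\alpha_k\|x_{k-1}-p^*\|^2+\xi\delta_k\bigr]$. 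Subtracting $\alpha_k\|x_k-p^*\|^2$ from both sides and using the monotonicity $\alpha_{k-1}\le\alpha_k$ to replace $-\alpha_k\|x_{k-1}-p^*\|^2$ by the larger $-\alpha_{k-1}\|x_{k-1}-p^*\|^2$ on the right yields $\tilde C_{k+1}(p^*)\le Q_k\tilde C_k(p^*)$, and a straightforward induction starting from $\tilde C_1(p^*)=\|x_1-p^*\|^2$ delivers \eqref{E:Lemma3_1}. For \eqref{E:Lemma3_2}, I would drop the nonnegative $\xi\delta_{k+1}$ in \eqref{E:Lemma3_1} to get $\|x_{k+1}-p^*\|^2\le\alpha_k\|x_k-p^*\|^2+\prod_{j=1}^k Q_j\cdot\|x_1-p^*\|^2$, bound $\alpha_k\le\alpha$, and iterate the first-order recurrence.

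The main obstacle is the careful bookkeeping needed to recognize the $\tilde C_k(p^*)$ pattern inside the combined estimate: the $\alpha_k$ versus $\alpha_{k-1}$ comparison must be invoked in the correct direction so that both sides of $\tilde C_{k+1}(p^*)\le Q_k\tilde C_k(p^*)$ share the same indexing convention for the coefficient of the previous iterate. Apart from that, the argument is a direct adaptation of the quasi-nonexpansive analysis of Lemma \ref{L:1} and Proposition \ref{P:1}, with $Q_k$ now playing the role of the per-step contraction factor that drives the linear rate.
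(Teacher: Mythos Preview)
Your proposal is correct and follows essentially the same route as the paper: apply \eqref{E:Lemma2_1}, expand $\|y_k-p^*\|^2$ via \eqref{E:proof2}, add $\xi$ times \eqref{E:proof34} and drop the acceleration term to get \eqref{E:Lemma3}, then use \eqref{E:H:2} to obtain $\tilde C_{k+1}(p^*)\le Q_k\tilde C_k(p^*)$ and iterate. The only detail you leave implicit is that, after subtracting $\alpha_k\|x_k-p^*\|^2$, you need not only $\alpha_{k-1}\le\alpha_k$ but also $Q_k\le 1$ (so that $Q_k(1+\alpha_k)-\alpha_k\le Q_k$) to recognize $Q_k\tilde C_k(p^*)$ on the right; the paper uses both facts in the same step.
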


\begin{proof}
	We use \eqref{E:Algorithm} and \eqref{E:Lemma2_1} to obtain
	$$\|x_{k+1}-p^*\|^2 \leq Q_k\|y_k-p^*\|^2 -\xi\lambda_k(1-\lambda_k)\|y_k-T_ky_k\|^2.$$
	
	Now, by \eqref{E:proof2}, we deduce that
	$$\|x_{k+1}-p^*\|^2 \le Q_k\left[ (1+\alpha_k)\|x_k-p^*\|^2+\alpha_k(1+\alpha_k)\|x_k-x_{k-1}\|^2-\alpha_k\|x_{k-1}-p^*\|^2\right]-\xi\lambda_k(1-\lambda_k)\|y_k-T_ky_k\|^2.$$
	On the other hand, from \eqref{E:proof34}, we get
	$$\xi\delta_{k+1} \le \xi \nu_k\alpha_k(1-\alpha_k)\|x_k-x_{k-1}\|^2 +\xi \lambda_k(1-\lambda_k)\|y_k-T_ky_k\|^2,$$
	and the last two inequalities together imply \eqref{E:Lemma3}. For the second part, inequalities \eqref{E:Lemma3} and \eqref{E:H:2} together give
	$$\|x_{k+1}-p^*\|^2 +\xi\delta_{k+1} \le Q_k\left[ (1+\alpha_k)\|x_k-p^*\|^2-\alpha_k\|x_{k-1}-p^*\|^2\right]+\xi Q_k\delta_k.$$	
	Subtracting $\alpha_k\|x_k-p^*\|^2$, we are left with
	\begin{eqnarray*} 
		\tilde C_{k+1}(p^*) & \le & \big(Q_k(1+\alpha_k)-\alpha_k\big)\|x_k-p^*\|^2-\alpha_k Q_k\|x_{k-1}-p^*\|^2+\xi Q_k \delta_k \\
		& \le & Q_k\|x_k-p^*\|^2-Q_k\alpha_{k-1}\|x_{k-1}-p^*\|^2+\xi Q_k\delta_k \\
		& = & Q_k\tilde C_k(p^*),
	\end{eqnarray*} 
	where the second inequality comes from $\alpha_k$ being nondecreasing and $Q_k \leq 1$. This gives \eqref{E:Lemma3_1}, recalling that $\tilde C_1(p^*)=\|x_1-p^*\|^2$.
	Now, since $\|x_{k+1}-p^*\|^2-\alpha_{k}\|x_{k}-p^*\|^2\le \tilde C_{k+1}(p^*)$, we have 
	
	$$\|x_{k+1}-p^*\|^2 \le \alpha_{k}\|x_{k}-p^*\|^2+\left[\prod_{j=1}^{k}Q_j\right]\|x_1-p^*\|^2 \le \alpha\|x_{k}-p^*\|^2+\left[\prod_{j=1}^{k}Q_j\right]\|x_1-p^*\|^2,$$
	which we then iterate to obtain \eqref{E:Lemma3_2}.
\end{proof}

The preceding estimations allow us to establish the main result of this section, namely:

\begin{theorem} \label{T:2}
	Let $(T_k)$ be a sequence of operators on $\HH$, such that $\Fix(T_k)\equiv \{p^*\}$ and $T_k$ is $q_k$-quasi-contractive for each $k\in\N$. Let $(x_k,y_k)$ satisfy \eqref{E:Algorithm}, and let $\xi\in [0,1]$. Write $Q_k=Q(\lambda_k,q_k,\xi)$, and assume that \eqref{E:H:2} holds for all $k\in\N$. We have the following:
	\begin{itemize}
		\item [i)] If $\sum_{k=1}^\infty\lambda_k(1-q_k^2)=\infty$, then $x_k$ converges strongly to $p^*$, as $k\to\infty$.
		\item [ii)] If $\lambda_k\ge\lambda>0$ and $q_k\le q<1$ for all $k\in\N$, then $x_k$ converges linearly to $p^*$, as $k\to\infty$. More precisely, 
		\begin{equation} \label{E:rate}
			\|x_k-p^*\|^2 \le
			\left[ \frac{Q(\lambda,q,\xi)^{k+1}-\alpha^{k+1}}{Q(\lambda,q,\xi)-\alpha}\right]\|x_1-p^*\|^2=\mathcal O\left(Q(\lambda,q,\xi)^{k}\right). 
		\end{equation} 
	\end{itemize}
	\if{ 	$$\|x_{k+1}-p^*\|^2 \le \alpha^k\left[1+ \sum_{j=1}^k\left[\frac{1-\lambda(1-q^2)}{\alpha}\right]^j\right]\|x_1-p^*\|^2,$$
		where $\lambda=\inf_{k\ge 1}\lambda_k$. If we set $\beta_+=\max\{\alpha,1-\lambda(1-q^2)\}$ and $\beta_-=\min\{\alpha,1-\lambda(1-q^2)\}$, then
		\begin{equation} \label{E:rate}
			\|x_k-p^*\|^2\le \frac{2}{\beta_+-\beta_-}\left(\beta_+^k-\beta_-^k\right)\|x_1-p^*\|^2=\mathcal O\left(\beta_+^k\right).
		\end{equation} 
	}\fi 
\end{theorem}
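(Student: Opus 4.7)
The plan is to derive both parts from the uniform estimate \eqref{E:Lemma3_2}, which has already been established.

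For part (ii), the monotonicity of $Q$ noted right after \eqref{E:Q} together with $\lambda_k \geq \lambda$ and $q_k \leq q$ imply $Q_k \leq Q(\lambda,q,\xi) =: \tilde Q$ for every $k$. Substituting into \eqref{E:Lemma3_2} reduces the right-hand side to
$$\Big[\alpha^k + \sum_{j=1}^{k}\alpha^{k-j}\tilde Q^{j}\Big]\|x_1-p^*\|^2 \;=\; \Big[\sum_{j=0}^{k}\alpha^{k-j}\tilde Q^{j}\Big]\|x_1-p^*\|^2,$$
and a standard geometric-series identity yields the closed form $(\alpha^{k+1}-\tilde Q^{k+1})/(\alpha-\tilde Q)$ appearing in \eqref{E:rate} (with the convention $(k+1)\alpha^k$ in the degenerate case $\alpha=\tilde Q$). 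Since $\alpha<1$ and $\tilde Q<1$, the $\mathcal O(\tilde Q^k)$ rate is immediate.

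For part (i), I would first prove the comparison $Q_k \leq 1-\lambda_k(1-q_k^2)$ that holds uniformly in $\xi\in[0,1]$. Setting $u_k=1-q_k$ and expanding \eqref{E:Q}, the coefficient of $\lambda_k u_k^2$ simplifies to $\xi+\lambda_k(1-\xi)$, which is a convex combination of $1$ and $\lambda_k$ and hence bounded above by $1$; collecting the linear term in $\lambda_k u_k$ then yields $Q_k \leq 1-\lambda_k u_k(2-u_k) = 1-\lambda_k(1-q_k^2)$. Writing $P_j=\prod_{i=1}^{j}Q_i$ and using $\log(1-t)\leq -t$, the divergence hypothesis $\sum \lambda_k(1-q_k^2)=\infty$ forces $P_j\to 0$.

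Strong convergence then follows from \eqref{E:Lemma3_2}. The leading term $\alpha^k\|x_1-p^*\|^2$ vanishes since $\alpha<1$; and the cross sum $\sum_{j=1}^{k}\alpha^{k-j}P_j$ tends to $0$ by a routine Abel-type splitting: given $\varepsilon>0$, choose $N$ so that $P_j<\varepsilon(1-\alpha)$ for $j>N$, so that the tail is bounded by $\varepsilon$ while the head $\alpha^{k-N}\sum_{j=1}^{N}P_j$ vanishes as $k\to\infty$. The only delicate step in the whole argument is the $\xi$-uniform comparison $Q_k \leq 1-\lambda_k(1-q_k^2)$, which is what makes the divergence condition in (i) the natural threshold for strong convergence irrespective of the auxiliary weight $\xi$; everything else is elementary summation manipulation on top of the already-proved inequality \eqref{E:Lemma3_2}.
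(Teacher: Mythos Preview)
Your argument for part~(ii) follows the same path as the paper, but you skip one point the paper makes explicit: from \eqref{E:H:2} one actually deduces $\alpha<\tilde Q$ (rewrite \eqref{E:H:2} with constant parameters as $Q\alpha(1+\alpha)\le \xi\nu(1-\alpha)(Q-\alpha)$ and observe the left side is positive whenever $\alpha>0$). Without this strict inequality your final sentence is not quite right: if $\alpha\ge\tilde Q$ the sum $\sum_{j=0}^{k}\alpha^{k-j}\tilde Q^{j}$ is $\mathcal O(\alpha^{k})$ or $(k+1)\alpha^{k}$, not $\mathcal O(\tilde Q^{k})$. Once $\alpha<\tilde Q$ is in place, the geometric identity and the rate follow exactly as you wrote.

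For part~(i) you take a genuinely different route. The paper works from \eqref{E:Lemma3_1}: it shows $\tilde C_k(p^*)\to 0$, invokes the nonnegativity of $\|x_k-p^*\|^2-\alpha_{k-1}\|x_{k-1}-p^*\|^2$ (the same contradiction argument as in Proposition~\ref{P:1}), and then iterates the one-step inequality $\|x_k-p^*\|^2\le\alpha\|x_{k-1}-p^*\|^2+\varepsilon$. You instead stay with \eqref{E:Lemma3_2} and kill the convolution $\sum_{j=1}^{k}\alpha^{k-j}P_j$ by an Abel split. Your route is correct and arguably cleaner, since it bypasses the nonnegativity detour; it also makes explicit the bound $Q_k\le 1-\lambda_k(1-q_k^2)$, which the paper uses tacitly when asserting $\prod_k Q_k=0$. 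The trade-off is that the paper's approach through $\tilde C_k$ keeps track of slightly more structure (the $\xi\delta_k$ term), which is irrelevant for the bare convergence statement but connects better with the quasi-nonexpansive analysis of Section~\ref{section:weak}.
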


\begin{proof}
	For part i), write $p_k=\lambda_k(1-q_k^2)$, and observe that $Q_k\le 1-p_k$, because $Q$ increases with $\xi$. It ensues that
	$$\prod_{k=1}^K Q_k\le\prod_{k=1}^K(1-p_k)=\exp\left[\sum_{k=1}^K\ln(1-p_k)\right]\le \exp\left[-\sum_{k=1}^K p_k\right]$$
	since $\ln(1-z)\le-z$. If $\sum_{k=1}^\infty\lambda_k(1-q_k^2)=\infty$, then $\prod_{k=1}^{\infty}Q_k=0$. By \eqref{E:Lemma3_1}, $\lim_{k\to\infty}\tilde C_k(p^*)=0$. As in the proof of Theorem \ref{T:1}, we can show that the sum of the first two terms in $\tilde C_k(p^*)$, namely $\|x_k-p^*\|^2-\alpha_{k-1}\|x_{k-1}-p^*\|^2$, is nonnegative. Therefore, $\lim_{k\to\infty}\left[\|x_k-p^*\|^2-\alpha_{k-1}\|x_{k-1}-p^*\|^2\right]=0$. If $\alpha_k\equiv 0$, the conclusion is straightforward. Otherwise, given any $\varepsilon>0$, there is $K\in\N$ such that 
	$$\|x_k-p^*\|^2\le\alpha \|x_{k-1}-p^*\|^2+\varepsilon$$
	for all $k\ge K$, since $\alpha_k$ is nondecreasing. This implies
	$$\|x_k-p^*\|^2\le\alpha^{k-K}\|x_{K}-p^*\|^2+\varepsilon(1-\alpha)^{-1},$$
	so that $\limsup_{k\to\infty}\|x_k-p^*\|\le\varepsilon(1-\alpha)^{-1}$, and the conclusion follows. \\
	For ii), we know that $Q(\lambda_k,q_k,\xi)\le Q(\lambda,q,\xi)$, because $Q$ increases either if $\lambda$ decreases, and also if $q$ increases. Gathering the common factors in the second and third terms on the left-hand side of inequality \eqref{E:H:2}, we deduce that $Q\ge \alpha$ (strictly if $\alpha>0$). Using \eqref{E:Lemma3_2}, and observing that the case $Q(\lambda,q,\xi)=\alpha$ is incompatible with inequality \eqref{E:H:2}, we deduce that
	$$\|x_{k+1}-p^*\|^2 \le \alpha^k\left[ \sum_{j=0}^k\left(\frac{Q(\lambda,q,\xi)}{\alpha}\right)^j\right]\|x_1-p^*\|^2=\left[ \frac{\alpha^{k+1}-Q(\lambda,q,\xi)^{k+1}}{\alpha-Q(\lambda,q,\xi)}\right]\|x_1-p^*\|^2,$$
	as claimed.
\end{proof}

\subsection{Behavior with and without inertia}

In the non-inertial case $\alpha_k\equiv 0$, \eqref{E:H:2} holds if either $\xi=0$ or $\lambda_k\le 1$ for all $k$, as in Hypothesis \ref{H:1}. This is less restrictive than Hypothesis \ref{H:2} (see Remark \ref{R:1}). To simplify the explanation, suppose $q_k\equiv q\in (0,1)$. The best convergence rate is
$$\|x_k-p^*\|=\mathcal O\big(q^k\big),$$
obtained from Theorem \ref{T:2} with $\lambda_k\equiv 1$ and $\xi=0$.
If $\alpha_k>0$ for at least one $k$, the case $\xi=0$ is ruled out, and
$$q^2\le(1-\lambda_k+\lambda_k q)^2= Q(\lambda_k,q,0)\le Q(\lambda_k,q,\xi)\le Q(\lambda_k,q,1)= 1-\lambda_k+\lambda_k q^2.$$
All inequalities are strict if $\lambda_k\in (0,1)$. This suggests that there may be operators for which the inertial step actually deteriorate the convergence, so inertial steps should be handled with caution and this can be seen as an argument {\it against} the use of inertia. Actually, it is possible to find a wide variety of behaviors, {\it even for some of the simplest operators}, as shown by the following case study:

\begin{example} \label{EG:minus_I}
	Let $\lambda_k\equiv \lambda\in(0,1)$ and $\alpha_k\equiv\alpha\in[0,1)$. Take $q\in(0,1]$, and consider the operator $T:\R\to \R$, defined by $Ty=-qy$, whose unique fixed point is the origin. \\
	
	If $\alpha=0$, for each $k\ge 0$, we have $x_{k+1}=Lx_k$, where we have written $L=1-\lambda(1+q)$. Iterating from $x_0=1$, we obtain $|x_k|=|L|^k$. If $\lambda(1+q)=1$, convergence occurs in one iteration. \\
	
	Now, let $\alpha\in(0,1)$, so that \eqref{E:Algorithm} reads
	\begin{equation}\label{E:Algorithm_minusI}
		x_{k+1}=L\big(x_k+\alpha(x_k-x_{k-1})\big). 	
	\end{equation}
	Here, we take $x_1=x_0=1$. We can rewrite \eqref{E:Algorithm_minusI} in matrix form as
	$$X_{k+1}=M X_k,\qquad\hbox{where}\qquad 
	M=\left(\begin{array}{cc}(1+\alpha)L & -\alpha L\\ 1 & 0\end{array}\right)\qquad\hbox{and}\qquad X_k=\left(\begin{array}{c}x_k \\ x_{k-1}\end{array}\right).$$
	As before, convergence occurs in one step if $L=0$. The eigenvalues of $M$ are
	$$\mu_{\pm}=\frac{(1+\alpha)L\pm \sqrt{(1+\alpha)^2L^2-4\alpha L}}{2}.$$
	Let us consider the case $L>0$ first. 
	If $(1+\alpha)^2L^2<4\alpha L$ (which is $\lambda(1+q)>(1-\alpha)^2/(1+\alpha)^2$), the eigenvalues are complex conjugates, both with modulus $|\mu_{\pm}|=\sqrt{\alpha L}<1$. Now, $\sqrt{\alpha L}<L$ if, and only if, $L>\alpha$, which means that $\lambda(1+q)<1-\alpha$. 
	Since $|x_k|=\mathcal O(|\mu_{\pm}|)$, the inertial iterations converge strictly faster than the noninertial ones if
	$$\frac{(1-\alpha)^2}{(1+\alpha)^2}<\lambda(1+q)<1-\alpha.$$
	If $L=\alpha$, the convergence rate is the same.
	Else, if $(1+\alpha)^2L^2\ge 4\alpha L$, then $M$ has two real eigenvalues (counting multiplicities), with $0<\mu_-\le \mu_+$. But since
	$L\in(0,1)$ implies $-L<-L^2$, we always have
	$$\mu_+<\frac{(1+\alpha)L+ \sqrt{(1+\alpha)^2L^2-4\alpha L^2}}{2}=\frac{(1+\alpha)L+L\sqrt{(1-\alpha)^2}}{2}=L<1.$$
	Therefore, the inertial iterations also converge strictly faster if
	$$0<\lambda(1+q)\le \frac{(1-\alpha)^2}{(1+\alpha)^2}.$$
	When $L<0$ ($\lambda(1+q)>1$), the matrix $M$ will always have two real eigenvalues, one of each sign. It is easy to verify that $|\mu_+|<|\mu_-|$, which implies that $|\mu_-|$ determines the convergence (the initial condition is not an eigenvector of $M$, so both eigenvalues intervene). But
	$$\mu_-=-\frac{(1+\alpha)|L|+ \sqrt{(1+\alpha)^2L^2+4\alpha |L|}}{2}<-\frac{(1+\alpha)|L|+ \sqrt{(1+\alpha)^2L^2}}{2}=-|L|=L.$$
	In this case, the inertial algorithm performs worse than the noninertial one. Moreover, the inertial iterations do not converge if $\mu_-\le -1$, which is equivalent to
	$$\lambda(1+q)\ge\frac{2(1+\alpha)}{1+2\alpha}.$$
	A few comments are in order:
	\begin{itemize}
		\item For $0<\lambda(1+q)<1-\alpha$, the inertial iterations converge at a strictly faster linear rate than the noninertial ones, even in the noncontracting case $q=1$. 
		\item At the transition point $\lambda(1+q)=1-\alpha$ the convergence rate is the same. 
		\item In the interval $1-\alpha<\lambda(1+q)<\frac{2(1+\alpha)}{1+2\alpha}$, the inertial step is counterproductive and noninertial iterations perform better, except for the singular value $\lambda(1+q)=1$, where both converge in one iteration. In both cases, the closer $\lambda(1+q)$ is to $1$, the faster the convergence.
		\item If $\lambda(1+q)\ge \frac{2(1+\alpha)}{1+2\alpha}$, the inertial iterations do not converge, while the noninertial ones do. This combination of parameters is not feasible if $q\le 1/3$. Notice that, picking $\lambda$ and $\alpha$ satisfying \eqref{E:H2_constant} can be read as picking $\lambda < S(\alpha)$, with $S(\alpha) = \frac{(1-\alpha)^2}{1-\alpha+2\alpha^2}$. Calling $P(\alpha)=\frac{1+\alpha}{1+2\alpha}$, it is easy to see that 
		\[\lambda  < S(\alpha) < P(\alpha) \leq \dfrac{2}{1+q} P(\alpha), \; \forall q \in (0,1].\]    
		Then $\lambda(1+q)<\frac{2(1+\alpha)}{1+2\alpha}$ for all $q \in (0,1]$. Therefore, this last case is incompatible with Hypotheses \eqref{H:1} or \eqref{H:2}.
	\end{itemize}
\end{example}

Now, the convergence rate results given by Theorem \ref{T:2} correspond to worst-case scenarios, which certainly must include cases like the one discussed in Example \ref{EG:minus_I}. However, this situation need not be representative of other concrete instances found in practice, in which inertia improves either the theoretical convergence rate guarantees (see Subsection \ref{S:Gradient} below, and the commented references), or the actual behavior when the algorithm is implemented. In fact, the numerical tests reported below show noticeable improvements in the performance of the selected algorithms, upon adding the inertial substep.

\subsection{Some insights into inequality \eqref{E:H:2}}\label{SS:remarks}

To fix the ideas, we comment on some special cases of inequality \eqref{E:H:2}, especially with constant parameters:

\begin{enumerate}
	\item In the limiting case $q_k\equiv 1$, we have $Q_k\equiv 1$. With constant parameters $\lambda_k\equiv \lambda$, $\alpha_k\equiv\alpha$, \eqref{E:H:2} becomes
	$$\lambda\alpha(1+\alpha)-\xi(1-\lambda)(1-\alpha)^2\le 0.$$
	If 
	\begin{equation} \label{E:feasibility} \frac{\alpha\lambda(1+\alpha)}{(1-\lambda)(1-\alpha)^2}\le  1,
	\end{equation}
	then, there is $\xi_{\alpha,\lambda,1}\in(0,1)$ such that \eqref{E:H:2} holds for all $\xi\in[\xi_{\alpha,\lambda,1},1]$. If $\xi=1$, it is precisely the constant case in Hypothesis \ref{H:1} (see \eqref{E:H2_constant} for a more direct comparison).
	
	\item Keeping $\lambda_k\equiv \lambda\in(0,1)$, $\alpha_k\equiv\alpha\in(0,1)$, and fixing $\xi=1$, let us take $q_k\equiv q\in(0,1)$. In this case, condition \eqref{E:H:2} is equivalent to
	\begin{equation} \label{E:H_q_constant}
		\Psi(\lambda):=(1+\alpha^2)(1-q^2)\lambda^2 -\big(2\alpha^2+(1-\alpha)(2-q^2)\big)\lambda +(1-\alpha)^2\ge 0.
	\end{equation} 
	Observe that $\Psi(0)=(1-\alpha)^2>0$, while $\Psi(1) = -\alpha q^2(1+\alpha)<0$. Since $\Psi$ is quadratic, the equation $\Psi(\lambda)=0$ has exactly one root in $(0,1)$, which we denote by $\lambda_{\alpha,q}$. It follows that, for each $(\alpha,q)\in[0,1)\times(0,1)$, inequality \eqref{E:H_q_constant} holds for all $\lambda\le\lambda_{\alpha,q}$. The values of $\lambda_{\alpha,q}$ on $[0,1)\times(0,1)$ are depicted in Figure \ref{fig:heatmap_lambdas}. 
	\begin{figure}[htbp]
		\centering
		\includegraphics[scale=0.3]{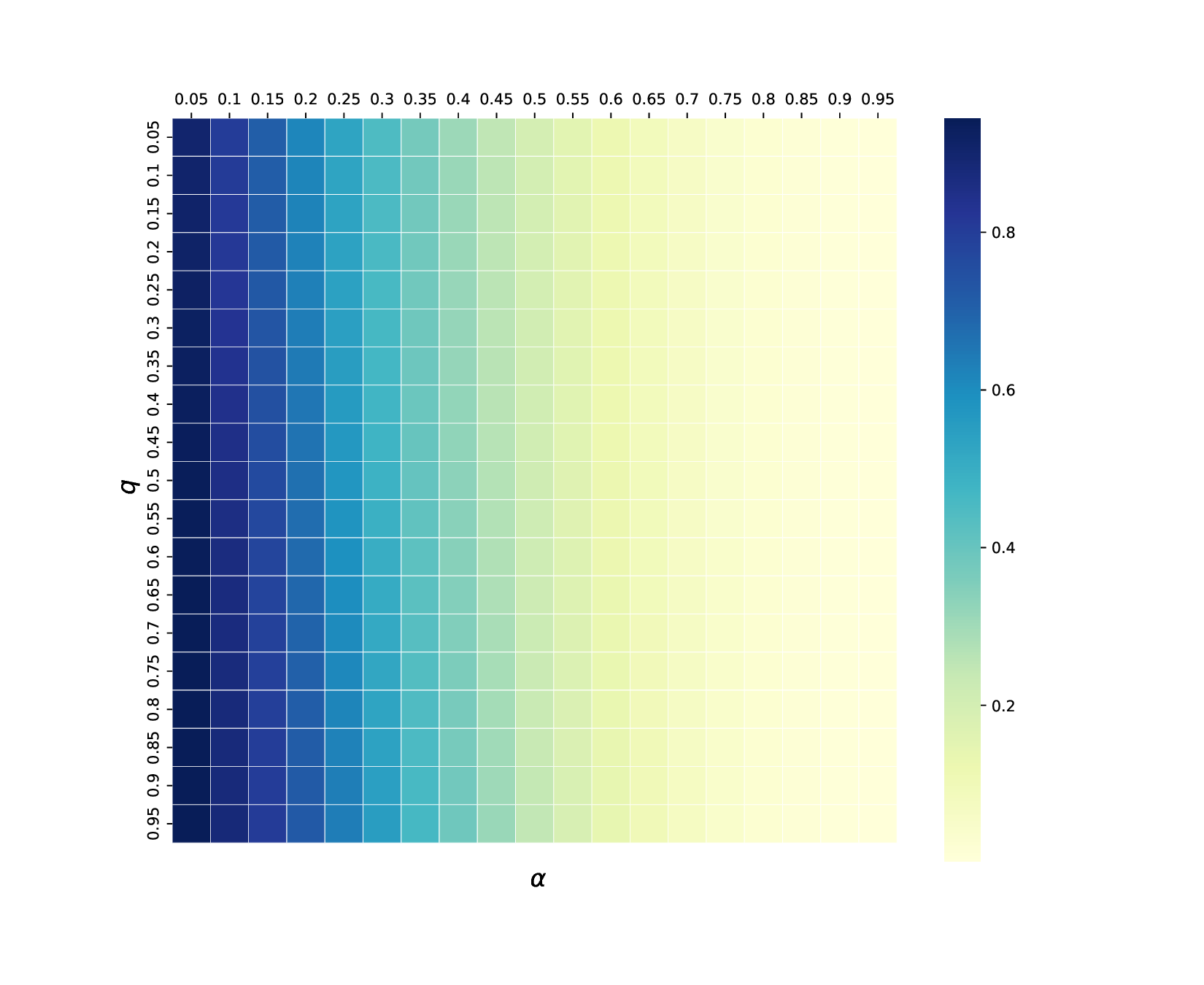}
		\caption{Values of $\lambda_{\alpha,q}$.  }
		\label{fig:heatmap_lambdas}
	\end{figure}
	Once a value for the inertial parameter $\alpha$ has been selected, the best theoretical convergence rate is
	$$Q(\lambda_{\alpha,q},q,1)=1-\lambda_{\alpha,q}(1-q^2).$$
	On the other hand, using the formula for the roots of a quadratic equation and some algebraic manipulations, we deduce that
	$$\left[\frac{2\alpha^2+(1-\alpha)}{2\alpha^2+(1-\alpha)(2-q^2)}\right]\lambda_{\alpha,1} \le \lambda_{\alpha,q} \le \lambda_{\alpha,1}$$
	for every $(\alpha,q)\in[0,1)\times(0,1)$. Therefore, $\lambda_{\alpha,q}\to \lambda_{\alpha,1}$ as $q\to 1$, and there is no discontinuity as the contractive character is lost. \\
	The case $\xi\in(0,1)$ is more involved. Lower values of $\xi$ make the constant $Q$ smaller, but may also restrict the possible values for $\alpha$ and $\lambda$, in view of inequality \eqref{E:H:2}. In the fully general case, if $\alpha$, $\lambda$ and $q$ satisfy
	$$\left[\frac{\alpha\lambda(1+\alpha)}{(1-\alpha)(1-\lambda)}\right]\left[\frac{1-\lambda+\lambda q^2}{1-\lambda+\lambda q^2-\alpha}\right] <1,$$
	then, there is $\xi_{\alpha,\lambda,q}\in(0,1)$ such that \eqref{E:H:2} holds for all $\xi\in[\xi_{\alpha,\lambda,q},1]$. As $q\to 1$, we recover \eqref{E:feasibility} as a limit case.
\end{enumerate}
	\section{Examples}\label{section:examples}

\subsection{Averaged Operators}\label{sec:averaged}

An operator $T:\HH\to \HH$ is $\gamma$-averaged if there is a nonexpansive operator $R:\HH\to \HH$ such that $T=(1-\gamma)I+\gamma R$. In this case, $\Fix(T)=\Fix(R)$.\\

Let $R:\HH\to \HH$ be nonexpansive and let $(\gamma_k)$ be a sequence in $(0,1)$. Setting $T_k=(1-\gamma_k)I+\gamma_kR$, \eqref{E:Algorithm} can be rewritten as
\begin{equation} \label{E:Algorithm_gamma}
	\left\{
	\begin{array}{rcl}
		y_k & = & x_k+\alpha_k(x_k-x_{k-1})\\
		x_{k+1} & = & (1-\gamma_k\lambda_k)y_k+\gamma_k\lambda_kR(y_k),
	\end{array}
	\right.
\end{equation} 
and Hypothesis \ref{H:2} becomes
$$\limsup_{k\to\infty}\big[\alpha_k(1+\alpha_k)+\big((\gamma_k\lambda_k)^{-1}-1\big)\alpha_k(1-\alpha_k)-\big((\gamma_{k-1}\lambda_{k-1})^{-1}-1\big)(1-\alpha_{k-1})\big]<0.$$
If $\gamma_k\lambda_k\to\eta>0$, this is
\begin{equation} \label{E:H2_constant_gamma}
	\eta(1-\alpha+2\alpha^2)<(1-\alpha)^2.
\end{equation}
It is not necessary to implement the algorithm using the operator $R$ explicitly. However, the interval for the relaxation parameters is enlarged, and it may be convenient to over-relax. We shall come back to this point in the numerical illustrations.\\

\subsection{Euler Iterations and Gradient Descent}  \label{S:Gradient}

An operator $B$ is {\it $\beta$-cocoercive} with $\beta>0$ if $\inner{Bx-By}{x-y}\geq \beta \norm{Bx - By}^2$ for all $x,y \in \HH$. \\

Let $B:\HH\to \HH$ be cocoercive with constant $\beta$, and let $(\rho_k)$ be a sequence in $(0,2\beta)$. For each $k\ge 1$, set
$$T_k=I-\rho_kB.$$
Then, $T_k$ is nonexpansive (thus quasi-nonexpansive) and $(\rho_k/2\beta)$-averaged. If $\rho_-:=\inf_{k\ge 1}\rho_k>0$, the family $(I-T_k)$ is asymptotically demiclosed. 
If $\lambda_k\rho_k\to\sigma$, Hypothesis \ref{H:2} becomes
$$\sigma(1-\alpha+2\alpha^2)<2\beta(1-\alpha)^2.$$
Now, let $f:\HH\to \HH$ be convex and differentiable, and assume $\nabla f$ is Lipschitz-continuous with constant $L$. Then, $B=\nabla f$ is cocoercive with constant $\beta=1/L$. If, moreover, $f$ is strongly convex with parameter $\mu$ and $\rho_k\le 2/(L+\mu)$, then $T_k$ is $q_k$-quasi-contractive with 
$$q_k=1-\frac{2\mu L\rho_k}{L+\mu}\le 1-\frac{2\mu L\rho_-}{L+\mu}=:q.$$
Therefore, $(T_k)$ is $q$-quasi-contractive. Considering the non-inertial case ($\alpha_k \equiv 0$), $\lambda_k \equiv 1$ and the fixed-sted choice $\rho_k=2/(\mu + L)$, the algorithm exhibits a rate of convergence
$$f(x_k) - f^* \leq \dfrac{L}{2}\left( \dfrac{Q-1}{Q+1}\right)^{2k}\norm{x_0 - x^*}^2,$$
where $Q=L/\mu$ is the {\it condition number}  (\cite[Theorem 2.1.15]{nesterov2018lectures}. Introducing the inertial term, and using  
$$\rho_k=1/L\qquad\hbox{and}\qquad \alpha_k\equiv\left(\frac{\sqrt{L}-\sqrt{\mu}}{\sqrt{L}+\sqrt{\mu}}\right),$$ 
it turns into  {\it constant step scheme, III} \cite{nesterov2018lectures}, which has a rate of convergence of 
$$f(x_k) - f^* \leq \min \left\lbrace \left( 1 - \sqrt{\dfrac{\mu}{L}} \right)^k, \dfrac{4L}{(2\sqrt{L}+k\sqrt{\mu})^2} \right\rbrace\left(f(x_0) - f^* + \frac{\mu}{2}\norm{x_0-x^*}^2\right).$$
Here, Hypothesis \ref{H:2} can be written as
$$\lambda<\dfrac{2Q}{1-\sqrt{Q}+2Q},$$
which gives the condition for the convergence of Nesterov's constant step scheme with constant relaxation $\lambda$.  

\subsection{Proximal and Forward-Backward Methods}\label{s:FB}

Let $M:\HH\to 2^{\HH}$ be maximally monotone and let $(\rho_k)$ be a positive sequence. The {\it proximal method} consists in iterating
\begin{equation} \label{E:T_PROX}
	z_{k+1}=(I+\rho_k M)^{-1}z_k,
\end{equation} 
for $k\ge 1$. The operator $T_k=J_{\rho_k M}:=(I+\rho_k M)^{-1}$ is nonexpansive, $\frac{1}{2}$-averaged, and $Z=\bigcap_{k\ge 1}\Fix(T_k)=M^{-1}0$. If $\lambda_k\to\lambda$, Hypothesis \ref{H:1} is reduced to
$$	\lambda(1-\alpha+2\alpha^2)<2(1-\alpha)^2.$$
As before, the family $(I-T_k)$ is asymptotically demiclosed at $0$ if $\inf_{k\geq 1}\rho_k>0$. To see this, let $(z_k)$ be a sequence in $\HH$ such that $z_k\wto z$ and $z_k-T_kz_k\to 0$. We must show that $0\in Mz$. By the definition of $T_k$, we have
$$\frac{1}{\rho_k}(z_k-T_kz_k)\in M(T_kz_k).$$
The left-hand side converges strongly to zero, while $T_kz_k\wto z$. We conclude by the weak-strong closedness of the graph of $M$. \\

Let $A:\HH\to 2^{\HH}$ be maximally monotone, let $B:\HH\to \HH$ be cocoercive with parameter $\beta$, and let $(\rho_k)$ be a sequence in $(0,2\beta)$. For each $k\ge 1$, set
$$T_k=(I+\rho_k A)^{-1}(I-\rho_kB).$$
Then, $T_k$ is $\gamma_k$-averaged with $\gamma_k=2\beta(4\beta-\rho_k)^{-1}$. If $\rho_k\to\rho$ and $\lambda_k\to\lambda$, then Hypothesis \ref{H:2} is equivalent to
$$	\lambda(1-\alpha+2\alpha^2)<\left(2-\frac{\rho}{2\beta}\right)(1-\alpha)^2.$$
As in the proximal case, the family $(I-T_k)$ is asymptotically demiclosed at $0$ if $\inf_{k\ge 1}\rho_k>0$. 

\subsection{Douglas-Rachford and primal-dual splitting}\label{S:DR}

Let $A, B:\HH\to 2^{\HH}$ be maximally monotone, and let $(r_k)$ be a positive sequence. The {\it Douglas-Rachford} splitting method consists in iterating $z_{k+1}=T_{r_k}z_k$, for $k\geq 1$, where
\begin{equation} \label{E:DRS}
	T_{r}=J_{r A}\circ\big(2J_{r B}-I\big)+\big(I-J_{rB}\big)=\frac{1}{2}\big(I+(2J_{r A}-I)\circ (2J_{r B}-I)\big).
\end{equation} 
The second expression shows that $T_r$ is averaged. Using the weak-strong closedness of the graphs of $A$ and $B$, and a little algebra, one proves that the family $\big(I-T_{r_k}\big)$ is asymptotically demiclosed if $\inf_{k\ge 0} r_k>0$. Finally, observe that $\Zer(A+B)=J_{rB}\Fix(T_r)$. \\

More generally, let $X$ and $Y$ be Hilbert spaces, and consider the {\it primal problem}, which is to find $\hat x\in X$ such that
$$0\in A\hat x+L^*BL\hat x,$$
where $A:X\to 2^X$ and $B:Y\to 2^Y$ are maximally monotone operators, and $L:X\to Y$ is linear and bounded. The {\it dual problem} is to find $\hat y\in Y$ such that
$$0\in B^{-1}\hat y-LA^{-1}(-L^*\hat y).$$
The primal and dual solutions, namely $\hat x$ and $\hat y$, are linked by the inclusions
$$-L^*\hat y\in A\hat x\qquad\hbox{and}\qquad L\hat x\in B^{-1}\hat y.$$

\begin{remark}
	Let $f:X\to \Rcupinf$ and $g:Y\to\Rcupinf$ be closed and convex, and set $A=\partial f$ and $B=\partial g$. The inclusions above are the optimality conditions for the primal and dual (in the sense of Fenchel-Rockafellar) optimization problems
	\begin{equation}\label{eq:min_problem}
		\min_{x\in X}\{f(x)+g(Lx)\}\qquad\hbox{and}\qquad \min_{y\in Y}\{g^*(y)+f^*(-L^*y)\},   
	\end{equation}
	respectively. Douglas-Rachford splitting applied to $ A=\partial g^*$ and $ B=\partial\big(f^*\circ(-L^*)\big)$ yields the {\it alternating direction method of multipliers} (see \cite{gabay1976dual}).
\end{remark}

In order to find a primal-dual pair, the {\it primal-dual} splitting algorithm (see \cite{chambolle2011first}) iterates:
\begin{equation}\label{eq:splitting_iterations}
	\left\{\begin{array}{rcl}
		x_{k+1} & = & J_{\tau A}\big(x_k-\tau L^*y_k\big) \\
		y_{k+1} & = & J_{\sigma B^{-1}}\big(y_k+\sigma L(2x_{k+1}-x_k)\big), 
	\end{array}\right.    
\end{equation}
with $\tau\sigma\|L\|^2\le 1$. The algorithm can be expressed as 
$(x_{k+1},y_{k+1})=T(x_k,y_k)$, where $T: X\times Y \to X\times Y$ is a $1/2$-averaged operator (see \cite[Remark 4.34]{bauschke2011convex}). \\
	
	An inertial version of the primal-dual iterations is given by
	\begin{equation}
		\left\{\begin{array}{l}
			(y_k,v_k)=(x_k,u_k)+\alpha_k\parenc{(x_k,u_k)-(x_{k-1},u_{k-1})}   \\
			p_{k+1}=J_{\tau A}(y_k-\tau L^* v_k)\\
			q_{k+1}=J_{\sigma B^{-1}}(v_k+\sigma L(2p_{k+1}-y_k))\\
			(x_{k+1},u_{k+1})= (1-\lambda_k)(y_k,v_k)+\lambda_k(p_{k+1},q_{k+1}),
		\end{array}\right.
		\label{seq:inertial_splitting}
	\end{equation}
	with appropriate sequences $\alpha_k$ and $\lambda_k$.\\
	
	In \cite{briceno2019primal}, the authors propose the {\it Split Douglas-Rachford} algorithm 
	\begin{equation} \label{E:SDRA}
		\left\{\begin{array}{rcl}
			v_k & = & \Sigma\big(I-J_{\Sigma^{-1}B}\big)\big(Lx_k+\Sigma^{-1}y_k\big) \\
			x_{k+1} & = & J_{\Upsilon A}\big(x_k-\Upsilon L^*v_k\big) \\
			y_{k+1} & = & \Sigma L(x_{k+1}-x_k)+v_k, 
		\end{array}\right.
	\end{equation} 
	where $\Upsilon$ and $\Sigma$ are elliptic linear operators that induce an {\it ad-hoc} metric and account for preconditioning.

	
	\subsection{Three Operator Splitting}\label{section:3op}
	
	Given three maximally monotone operators $A, B, C$ defined on the Hilbert space $H$, we wish to find $\hat x \in H$ such that 
	\begin{equation}\label{prob:3op}
		0 \in A\hat x + B\hat x + C\hat x.
	\end{equation}
	If $C$ is $\beta$-cocoercive, the {\it three-operator} splitting method \cite{davis2017three} generates a sequence $(z_k)$ by 
	\begin{equation}
		\left\{\begin{array}{l}
			x^B_k=J_{\rho B}(z_k)  \\
			x^A_k=J_{\rho A}(2x^B_k-z_k-\rho Cx_k^B)\\
			z_{k+1}= z_k+\lambda_k(x^A_k-x^B_k)
		\end{array}\right.
		\label{seq:threeOperator}
	\end{equation}
	starting from a point $z_0\in H$. Here $\rho \in(0,2\beta)$,  $\lambda_k\in(0,1/\gamma)$ and 
	\begin{equation}
		\gamma=\frac{2\beta}{4\beta-\rho}.    
		\label{eq:gamma}
	\end{equation}
	This recurrence is generated by iterating the $\gamma$-averaged operator
	$$T= I-J_{\rho B}+J_{\rho A}\circ(2J_{\rho B}-I-\rho C\circ J_{\rho B}),$$
	and we have $\Zer(A+B+C) = J_{\rho B} (\Fix T)$. Also, it gives the forward-backward method if $B=0$ and the Douglas-Rachford method if $C=0$. An inertial version is given by
	\begin{equation}
		\left\{\begin{array}{l}
			u_k=z_k+\alpha_k(z_k-z_{k-1})\\
			x^B_k=J_{\rho B}(u_k)  \\
			x^A_k=J_{\rho A}(2x^B_k-u_k-\rho Cx_k^B)\\
			z_{k+1}= u_k+\lambda_k(x^A_k-x^B_k),
		\end{array}\right.
		\label{seq:inertialThreeOperator}
	\end{equation}
	for appropriate choices of $\alpha_k$, $\lambda_k$. One particular instance is given by the optimization problem
	\begin{equation}\label{prob:3opconvex}
		\min f(x) + g(x) + h(Lx),
	\end{equation}
	where $f,g,h$ are closed and convex, $h$ has a $(1/\beta)$-Lipschitz-continuous gradient, and $L$ is a bounded linear mapping.   
	\section{Numerical Illustrations}\label{section:numerical}

In this section, we test the performance of the algorithm given by iterations \eqref{E:Algorithm} in two of the settings described in Section \ref{section:examples}. More precisely, we apply an inertial primal-dual splitting method to solve a TV-based denoising problem, and an inertial three-operator splitting algorithm to in-paint a corrupted image.

\subsection{Primal-Dual Splitting and TV-based Denoising} 

The algorithm will be tested in an image processing framework. Consider the problem

\begin{equation}\label{eq: TVproblem}
	\min_{x \in \R^{N_1 \times N_2}} F^{TV}(x) := \frac{1}{2} \normc{Rx-b} + w\norm{\nabla x}_1, 
\end{equation}
where $x \in \R^{N_1 \times N_2}$ is an image to recover from  a noisy observation $b\in \R^{M_1\times M_2}$, $R:\R^{N_1\times N_2}\to \R^{M_1\times M_2}$ is a blur operator, $w$ is a positive parameter, and $\nabla: x \mapsto \nabla x = (D_1x,D_2x)$ is the classical discrete gradient, whose adjoint $\nabla^*$ is the discrete divergence. A formulation for the gradient and divergence operators can be seen on \cite{chambolle2010introduction}. In these experiments, $R$ will be a Gaussian blur of size $9\times 9$, standard deviation $4$ and relative boundary conditions (see \cite{hansen2006deblurring} for details on the construction of the operator), and $w=10^{-4}$. Considering the original image $\bar{x}$ in Figure \ref{fig:image_TV1} composed by $256\times 256$ pixels, the observation $b$ is generated as $b=R\bar{x}+e$, where $e$ is an additive zero-mean white Gaussian noise with standard deviation $10^{-3}$ (Figure \ref{fig:image_TV2}).\\

Setting $f=0$, $g:(u,v^1,v^2) \mapsto \frac{1}{2} \normc{u-b} + w \norm{v^1}_1 + w \norm{v^2}_1$ and $L:x \mapsto (Rx,D_1 x , D_2 x)$, the problem \eqref{eq: TVproblem} can be formulated as \eqref{eq:min_problem}, and solved via \eqref{seq:inertial_splitting}. Since
\begin{equation}
	\prox_{\sigma g^*}: (u,v^1,v^2) \mapsto \left( \dfrac{u-\sigma b}{\sigma + 1}, v^1 - \sigma \prox_{\frac{w}{\sigma}\norm{\cdot}_1}\left(\dfrac{v^1}{\sigma} \right), v^2 - \sigma \prox_{\frac{w}{\sigma}\norm{\cdot}_1}\left(\dfrac{v^2}{\sigma} \right) \right)
\end{equation}
we are lead to Algorithm \ref{algorithm:TV}. 

\begin{algorithm}[h]
	\footnotesize
	\SetAlgoLined
	
	Choose $x_0,x_1\in \R^{N_1 \times N_2}$, $u_0,u_1\in \R^{m_1 \times m_2}$, $v_0^1, v_1^1, v_0^2, v_1^2 \in \R^{N_1 \times N_2}$, $(\lambda_k)_{k \in \N}$ and $(\alpha_k)_{k \in \N}$ such that hypotheses of Theorem \ref{T:1} are fulfilled, $\tau$ and $\sigma$ such that $\tau\sigma\norm{L}^2 \leq 1$, $\varepsilon>0$ and $r_0>\varepsilon$ \; 
	\While{$r_k>\varepsilon$ }{
		$(\bar{x}_k,\bar{u}_k,\bar{v}^1_k,\bar{v}^2_k)=(x_k,u_k,v_k^1,v_k^2)+\alpha_k [(x_k,u_k,v_k^1,v_k^2)-(x_{k-1},u_{k-1}, v_{k-1}^1,v_{k-1}^2 )]$\;
		$p_{k+1}=\bar{x}_k - \tau R^*\bar{u}_k - \tau D_1^* \bar{v}^1_k - \tau D_2^* \bar{v}^2_k$\;
		$q_{k+1}=(\bar{u}_k + \sigma R(2p_{k+1} - \bar{x}_k) - \sigma b) / (\sigma + 1)$\;
		$w_{k+1}^1= \bar{v}^1_k + \sigma D_1(2p_{k+1} - \bar{x}_k) - \sigma \prox_{w \norm{\cdot}_1/\sigma} (\bar{v}^1_k / \sigma + D_1 (2p_{k+1}-\bar{x}_k))$\;
		$w_{k+1}^2= \bar{v}^2_k + \sigma D_2(2p_{k+1} - \bar{x}_k) - \sigma \prox_{w \norm{\cdot}_1/\sigma} (\bar{v}_k^2 / \sigma + D_2 (2p_{k+1}-\bar{x}_k))   $\;
		$(x_{k+1},u_{k+1},v_{k+1}^1, v_{k+1}^2) = (1 - \lambda_k)(\bar{x}_k,\bar{u}_k,\bar{v}_k^1,\bar{v}_k^2) + \lambda_k (p_{k+1},q_{k+1},w_{k+1}^1, w_{k+1}^2) $ \;
		$r_k = \mathcal{R}((x_{k+1},u_{k+1},v_{k+1}^1, v_{k+1}^2),(x_k,u_k,v_k^1,v_k^2))$ 
	}\
	\Return $(x_{k+1},u_{k+1} ,v_{k+1}^1, v_{k+1}^2 )$
	\caption{}
	\label{algorithm:TV}
\end{algorithm}

For a stopping criterion, we consider the relative error
\begin{equation}\label{eq:stop_iter}
	\mathcal{R}(x_{k+1},x_k) \mapsto \dfrac{\norm{x_{k+1} - x_k}}{\norm{x_k}}.
\end{equation}
Since the involved operator is $1/2$-averaged (see \cite{briceno2015forward}), we may set $\lambda_k \equiv \lambda \in (0,2)$, as explained in Section \ref{sec:averaged}.\\ 

The algorithm is tested for 17 combinations of $\tau,\sigma$ satisfying the critical condition $\tau \sigma \norm{L}^2 = 1 $ (according to \cite{briceno2019primal}, this tends to yield the best performance). The number $\norm{L}$ is computed using an adaptation of \cite[Algorithm 12]{pustelnik2010methodes}. \\

{\bf Comparison in terms of the parameters $\tau$ and $\sigma$.} In a first stage, we compare the performance of the primal-dual splitting algorithm given by \eqref{eq:splitting_iterations} (that is, Algorithm \ref{algorithm:TV} with $\alpha_k \equiv 0$), and its inertial counterpart \eqref{seq:inertial_splitting}, with $\lambda_k \equiv 1$. 
The sequence $(\alpha_k)_{k \in \N}$ is
\begin{equation}
	\alpha_k=\alpha\left(1-\frac{1}{k^2}\right),
	\label{eq:alphasequence}
\end{equation}
with $\alpha = 1/(3+0.0001)$ (condition \eqref{E:H2_constant_gamma} with $\eta=\lambda/2$ gives the constraint $\alpha < 1/3$). Table \ref{tab:table_TV1} shows the execution time, number of iterations, and the value for the objective value reached, using a tolerance $\varepsilon=10^{-5}$. These results are depicted graphically, along with the percentage of reduction, in Figure \ref{fig:tv_tol5}. The recovered images are collected in Figures \ref{fig:image_TV3} and \ref{fig:image_TV4}. \\

\begin{table}[htbp]
	\footnotesize
	\centering
	\begin{tabular}{rrrrrrrrr}
		\toprule
		& & &\multicolumn{3}{c}{Original algorithm} & \multicolumn{3}{c}{Inertial algorithm} \\
		Case &      $\tau$ &    $\sigma$ &  Time &  Iterations & $F^{TV}(x)$  &  Time &  Iterations &  $F^{TV}(x)$  \\
		\midrule
		1 & 0.0004 & 282.8427 & 72.59 & 1565 & 7.30 & 55.11 & 1095 & 7.13 \\
		2 & 0.0010 & 122.6475 & 115.66 & 2437 & 2.84 & 86.97 & 1741 & 2.66 \\
		3 & 0.0024 & 53.183 & 110.16 & 2330 & 1.35 & 83.98 & 1672 & 1.27 \\
		4 & 0.0054 & 23.0614 & 98.28 & 2077 & 0.7566 & 72.33 & 1446 & 0.7341 \\
		5 & 0.0125 & 10 & 94.80 & 2015 & 0.4624 & 69.59 & 1394 & 0.4537 \\
		6 & 0.0288 & 4.3362 & 105.19 & 2253 & 0.2975 & 77.83 & 1562 & 0.2928 \\
		7 & 0.0665 & 1.8803 & 122.23 & 2593 & 0.2107 & 89.83 & 1773 & 0.2091 \\
		8 & 0.1533 & 0.8153 & 156.34 & 3248 & 0.1592 & 112.09 & 2184 & 0.1589 \\
		9 & 0.3536 & 0.3536 & 140.91 & 2922 & 0.1428 & 101.69 & 1956 & 0.1427 \\
		10 & 0.8153 & 0.1533 & 139.50 & 2856 & 0.1350 & 98.97 & 1908 & 0.1350 \\
		11 & 1.8803 & 0.0665 & 151.08 & 3123 & 0.1312 & 107.72 & 2084 & 0.1312 \\
		12 & 4.3362 & 0.0288 & 108.08 & 2249 & 0.1303 & 78.03 & 1503 & 0.1303 \\
		13 & 10 & 0.0125 & 60.28 & 1238 & 0.1301 & 42.78 & 833 & 0.1301 \\
		14 & 23.0614 & 0.0054 & 47.61 & 983 & 0.1302 & 35.70 & 693 & 0.1302 \\
		15 & 53.1830 & 0.0024 & 70.78 & 1466 & 0.1302 & 54.61 & 1065 & 0.1302 \\
		16 & 122.6475 & 0.0010 & 119.22 & 2471 & 0.1302 & 89.91 & 1762 & 0.1302 \\
		17 & 282.8427 & 0.0004& 179.22 & 3767 & 0.1302 & 150.52 & 2999 & 0.1302 \\
		\bottomrule
	\end{tabular}
	\caption{Execution time, number of iterations and final function value for the original primal-dual algorithm and the inertial version, with tolerance $\varepsilon=10^{-5}$. }
	\label{tab:table_TV1}
\end{table}

\begin{figure}[htbp]
	\centering
	\subfloat{\includegraphics[width=0.32\textwidth]{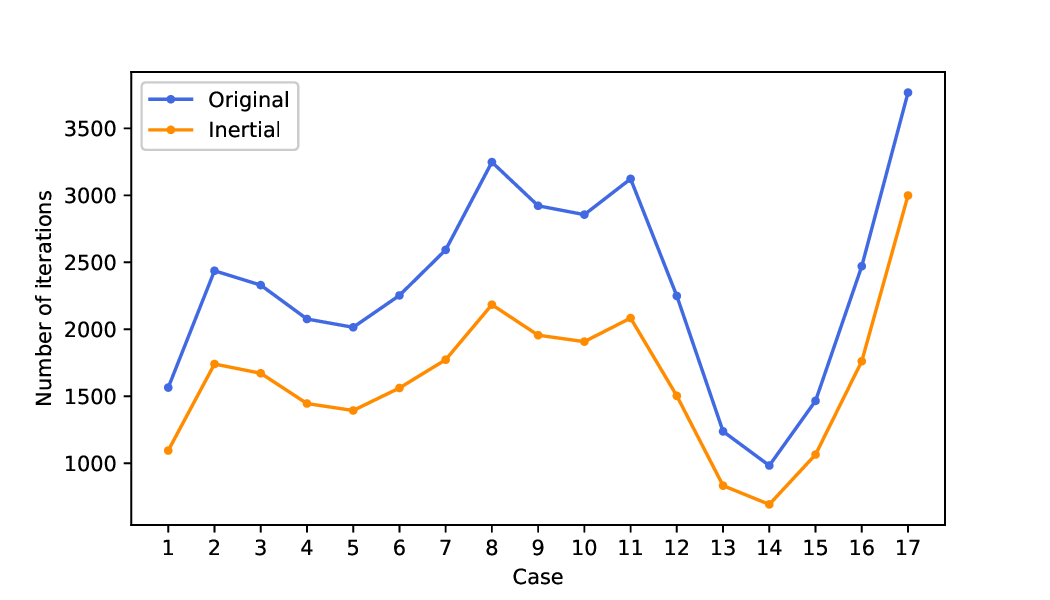}}
	\subfloat{\includegraphics[width=0.32\textwidth]{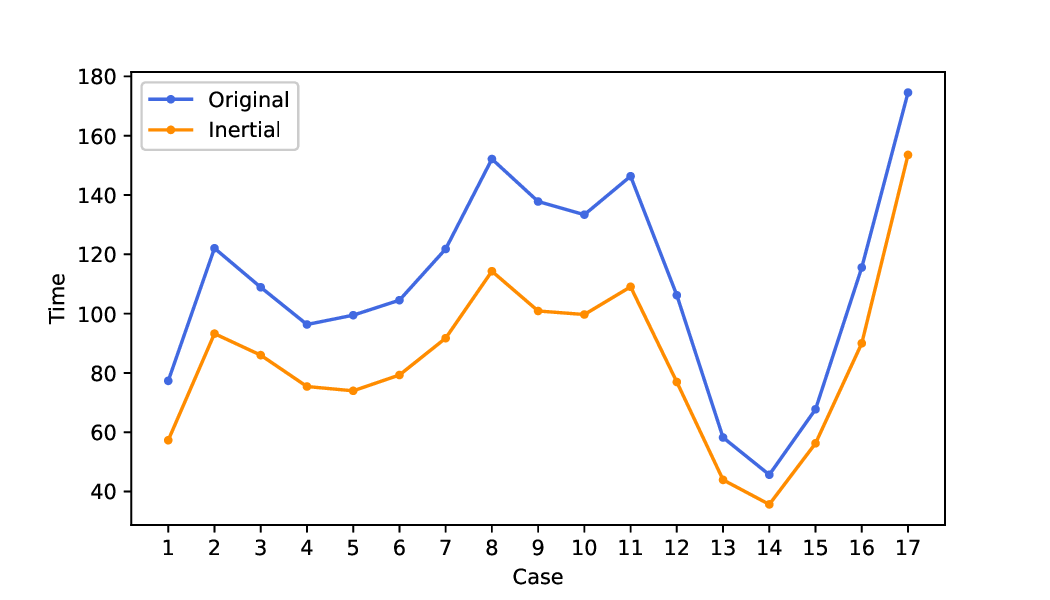}}
	\subfloat{\includegraphics[width=0.32\textwidth]{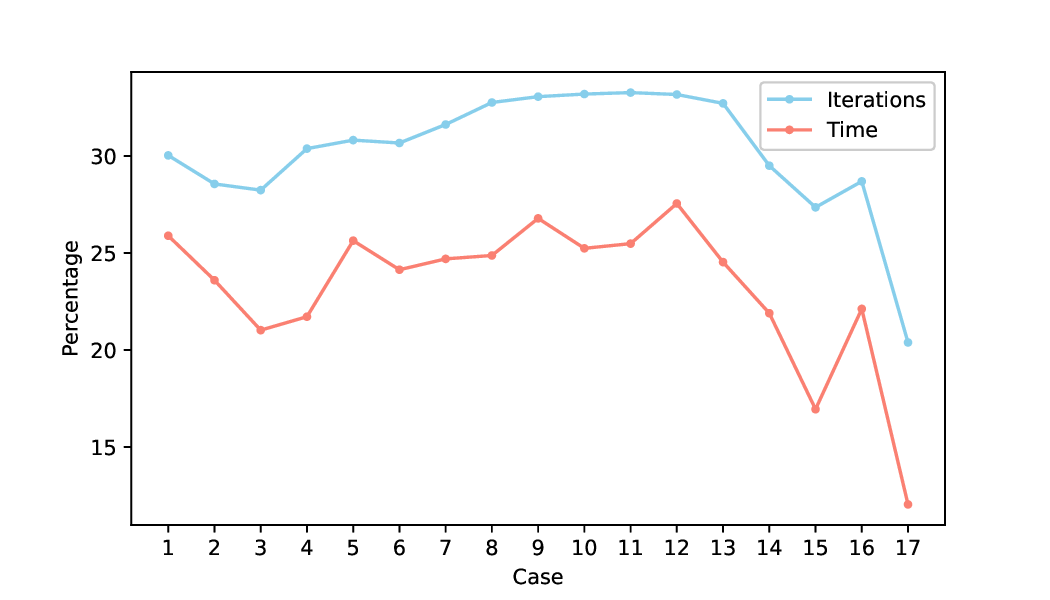}}
	\caption{Number of iterations (left), execution time (center), and percentage of reduction (right), from Table \ref{tab:table_TV1}.}
	\label{fig:tv_tol5}
\end{figure}


\begin{figure}[htbp]
	\centering
	\subfloat[Original Image]{\includegraphics[width=0.2\textwidth]{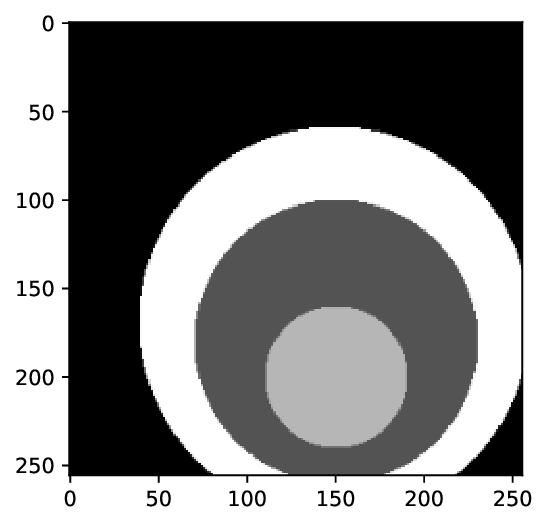}\label{fig:image_TV1} }
	\subfloat[Blurred Image]{\includegraphics[width=0.2\textwidth]{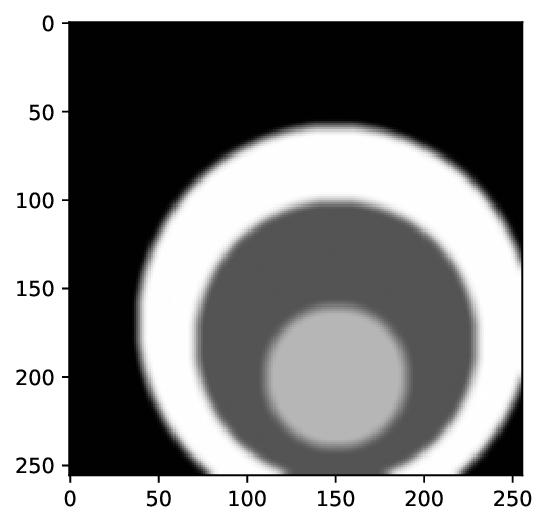}\label{fig:image_TV2} } 
	\subfloat[Recovered without inertia]{\includegraphics[width=0.2\textwidth]{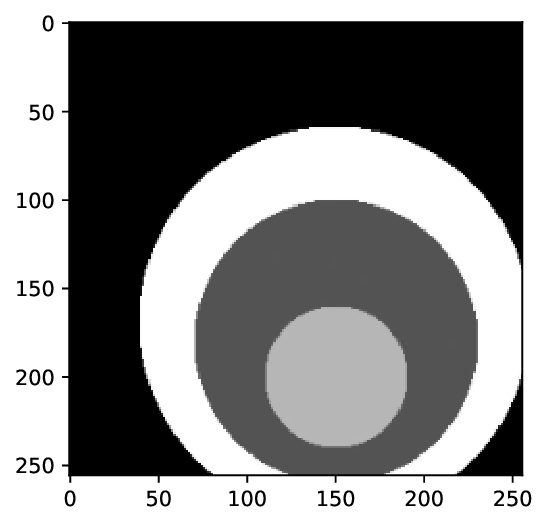}\label{fig:image_TV3} }
	\subfloat[Recovered with inertia]{\includegraphics[width=0.2\textwidth]{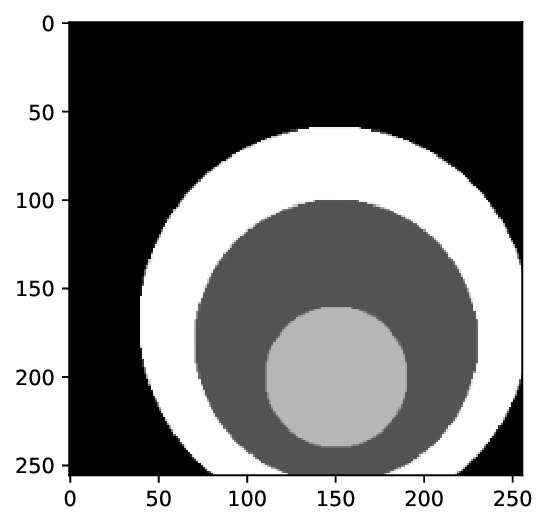}\label{fig:image_TV4} }
	\caption{Original, blurred and recovered images. Lowest recovered value $F^{TV}(x)=0.1301$ (case 13, both methods).}
\end{figure}

{\bf Comparison in terms of the relaxation parameter $\lambda$.} For both algorithms, case 14 showed the best performance in terms of iterations and execution time. We now assess the performance of the inertial algorithm with different values for $\lambda_k \equiv \lambda \in (0,2)$, and the corresponding inertial parameters fulfilling condition \eqref{E:H2_constant_gamma}. The results are shown in Table \ref{tab:table_TV3}, along with the value of $\alpha$ used in \eqref{eq:alphasequence}. A graphic depiction is shown as heatmaps in Figure \ref{fig:heat}. Larger values of the relaxation parameter $\lambda$ resulted in an improvement in the performance of both algorithms, but limit the impact of inertia, as it reduces the feasible range for the limit $\alpha$. A more thorough study on the selection of these parameters is the object of a forthcoming article.\\


\begin{table}[htbp]
	\footnotesize
	\centering
	\begin{tabular}{rrrrrrrrrr}
		\toprule
		& &\multicolumn{3}{c}{Original algorithm} & \multicolumn{3}{c}{Inertial algorithm} &  \% Iterations & \% Time \\
		$\lambda$ &  $\alpha$  & Time &  Iterations &  $F^{TV}(x)$ &  Time &  Iterations &  $F^{TV}(x)$ & reduction &  reduction\\
		\midrule
		0.2 & 0.6534 & 119.16 & 2592 & 0.1303 & 49.23 & 992 & 0.1304 & 61.73 & 58.69 \\
		0.4 & 0.5425 & 74.44 & 1589 & 0.1302 & 40.45 & 799 & 0.1303 & 49.72 & 45.66 \\
		0.6 & 0.4619 & 62.28 & 1341 & 0.1302 & 39.06 & 773 & 0.1302 & 42.36 & 37.28 \\
		0.8 & 0.3943 & 54.05 & 1146 & 0.1302 & 33.94 & 730 & 0.1302 & 36.30 & 37.21 \\
		1.0 & 0.3333 & 46.12 & 983 & 0.1302 & 34.47 & 693 & 0.1302 & 29.50 & 25.26 \\
		1.2 & 0.2748 & 41.16 & 861 & 0.1301 & 35.17 & 684 & 0.1302 & 20.56 & 14.55 \\
		1.4 & 0.1352 & 38.22 & 771 & 0.1301 & 34.45 & 675 & 0.1301 & 12.45 & 9.86 \\
		1.6 & 0.0967 & 33.89 & 718 & 0.1301 & 33.59 & 655 & 0.1301 & 8.77 & 0.89 \\
		1.8 & 0.0535 & 32.28 & 679 & 0.1301 & 32.62 & 657 & 0.1301 & 3.24 & -1.05 \\
		\bottomrule
	\end{tabular}
	\caption{Execution time, number of iterations, final function value and reduction percentage for the original primal-dual algorithm and
		the inertial version (case 14), with tolerance $\varepsilon=10^{-5}$.}
	\label{tab:table_TV3}
\end{table}

\begin{figure}[!htbp]
	\centering
	\includegraphics[width=0.45\textwidth,height=0.35\textwidth]{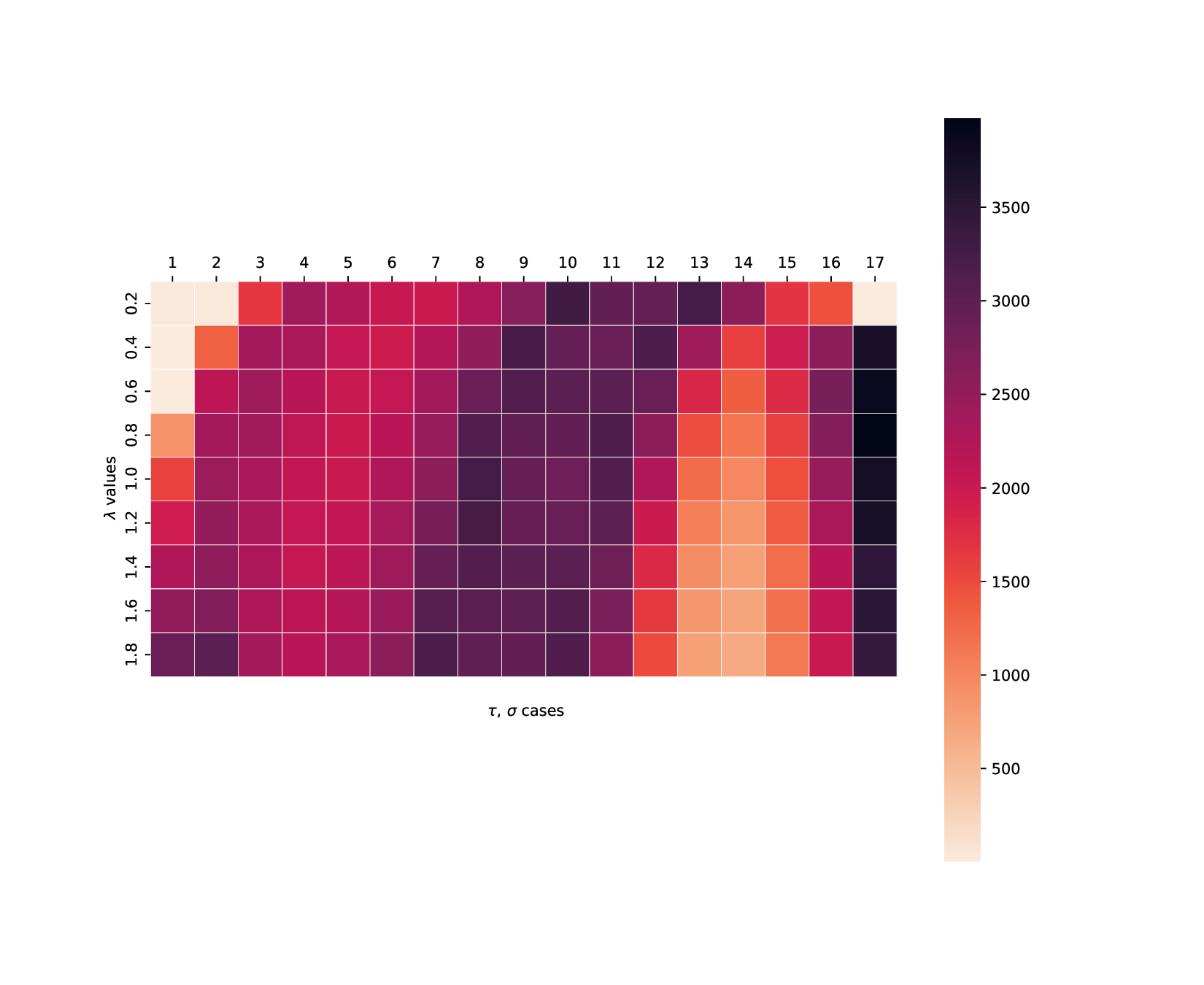}\label{fig:image_TV6}
	\includegraphics[width=0.45\textwidth,height=0.35\textwidth]{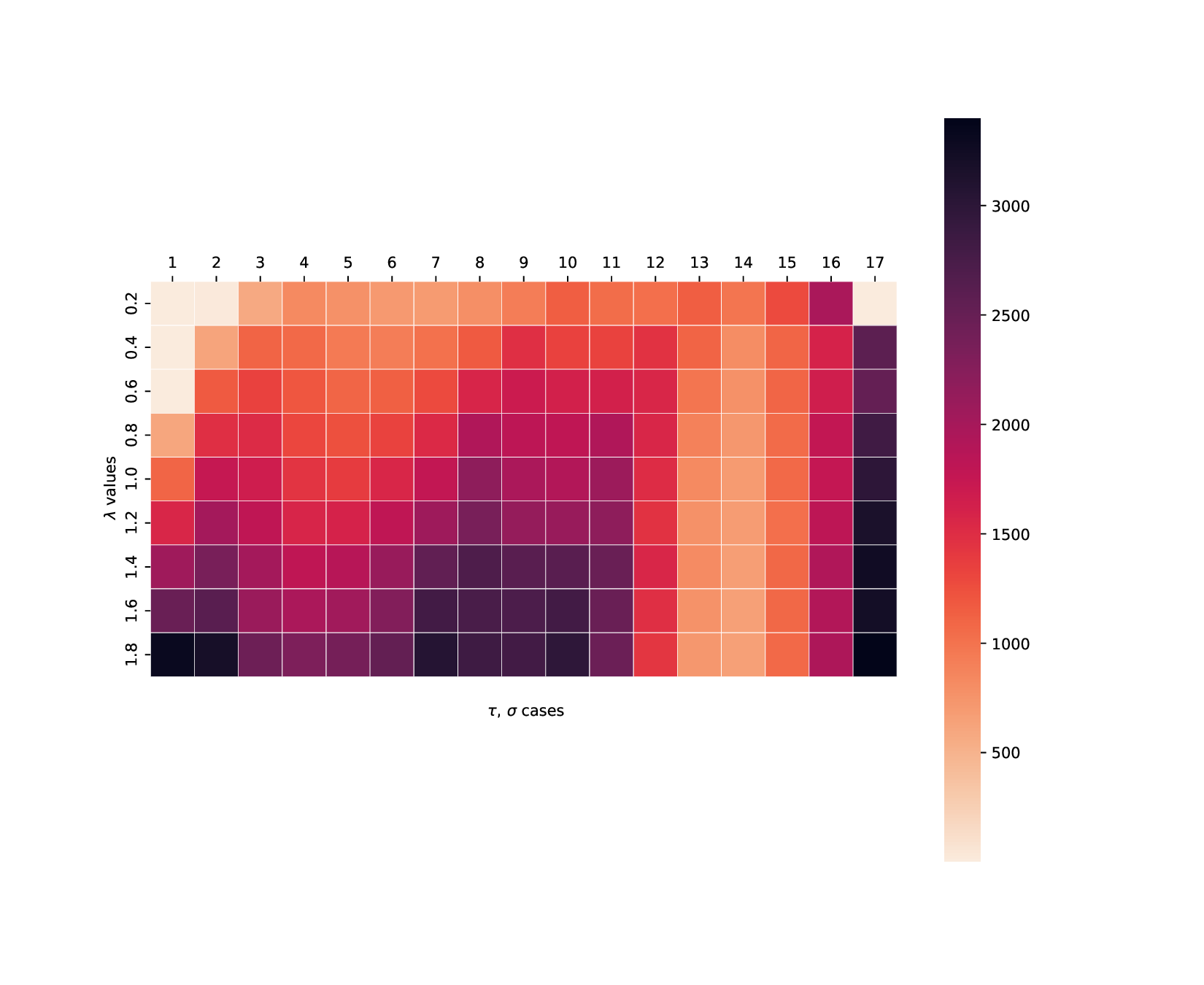}\label{fig:image_TV7}     
	\caption{Average number of iterations performed by the original (left) and inertial (right) algorithms, with tolerance $\varepsilon=10^{-5}$, for each value of $\lambda$, and each case of $\tau$ and $\sigma$, from Table \ref{tab:table_TV3}.}
	\label{fig:heat} 
\end{figure}


\if{ 
	Figure \ref{fig:heat} shows the iterations made by the original and the inertial algorithm for each case of $\lambda$, $\sigma$ and $\tau$. It can be noticed the general reduction in the amount of iterations made by the inertial algorithm, and for the case 14, for $\sigma$ and $\tau$, a stabilization can be noticed, for the different values of $\lambda$. The darkest points of the heat map, corresponds to cases where the algorithm reached the maximum of the iterations tolerated for the Algorithm, which is set to 4000.    
}\fi 

Finally, Figure \ref{fig:image_TV9} shows the evolution of the function values, the distance to the limit and the residuals, all in logarithmic scale, for case 14. The figure also includes the plot of $k\norm{z_k - Tz_k}^2$. Theorem \ref{T:1} states that the residuals show an non-asymptotic rate given by \eqref{E:nonasymptotic}, so we can conjecture an asymptotic rate of $o(1/k)$. 

\begin{figure}[!htbp]
	\centering
	\includegraphics[scale=0.35]{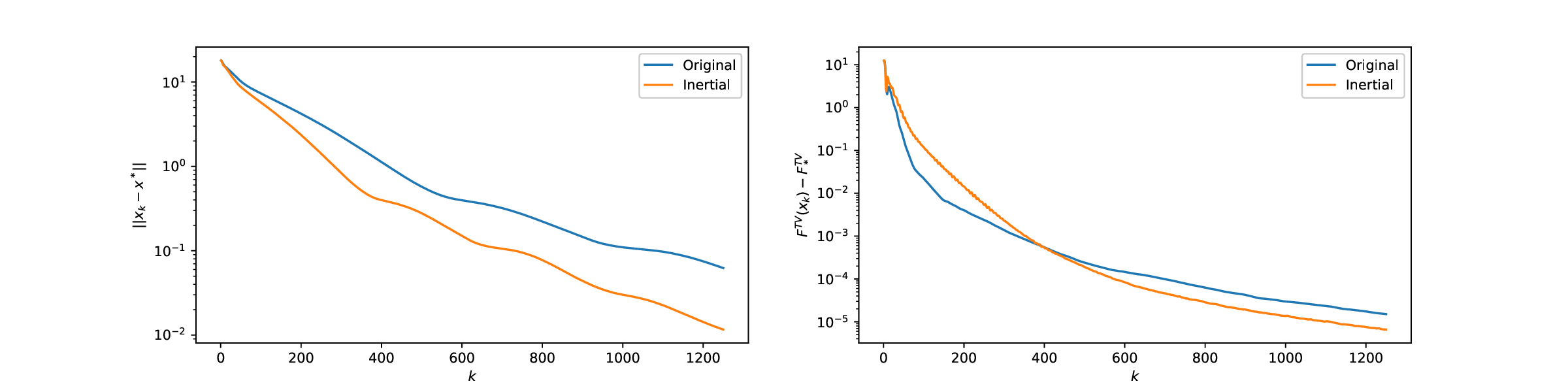}
	\includegraphics[scale=0.35]{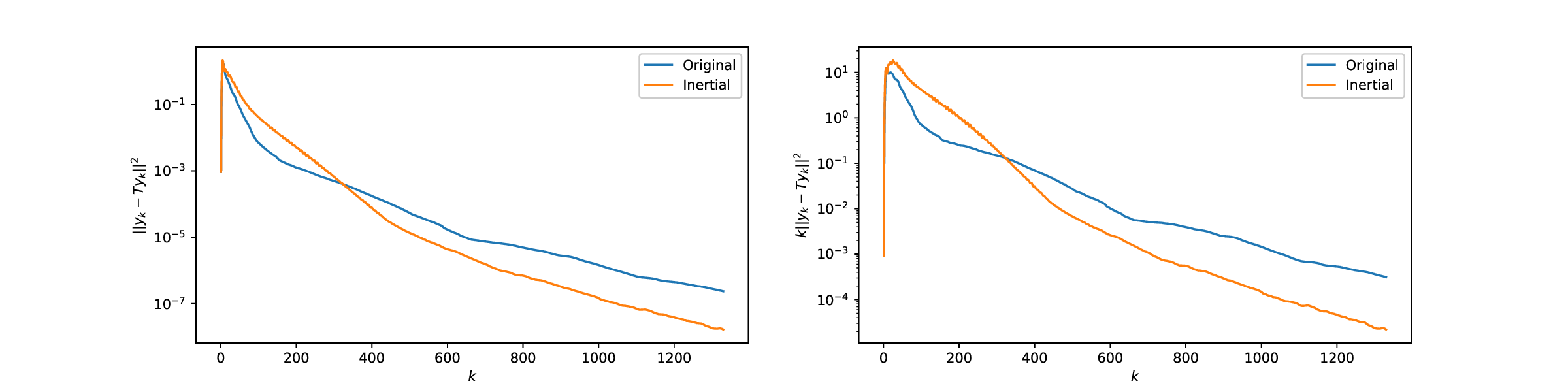}
	\caption{Evolution to the distance to the computed solution (top left), objective function values (top right), residuals $\norm{z_k - Tz_k}^2$ (bottom left) and $k\norm{z_k - Tz_k}^2$ (bottom right), for case 14. }
	\label{fig:image_TV9} 
\end{figure}


\subsection{Three-Operator Splitting and Image In-painting}

Suppose that $Z$ is a color image represented as a 3-D tensor where $Z(:,:,1), Z(:,:,2), Z(:,:,3)$ are the red, green and blue channels, respectively. Consider a damaged image $Y$, with randomly erased pixels, represented by the white color. The positions of the erased pixels are known. Denote $\mathcal{A}$ the linear operator that selects the set of correct entries of $Z$ (and so $\mathcal{A}^*$ is the {\it zero upsampling} operator). The objective is to recover the image, by filling the erased pixels. 
Following \cite{davis2017three} we consider the following formulation of the in-panting problem:
\begin{equation}
	\min_{Z\in \mathcal{H}} F(Z) := \frac{1}{2}\normc{\mathcal{A}(Z-Y)}+ w\norm{Z_{(1)}}_*+w\norm{Z_{(2)}}_*,
	\label{problem:imageInpanting}
\end{equation}
where $\mathcal{H}$ is the set of 3-D tensors, $Z_{(1)}$ is the matrix $[Z(:,:,1) \,Z(:,:,2) \,Z(:,:,3)]$, $Z_{(2)}$ is the matrix $[Z(:,:,1)^T \,Z(:,:,2)^T \,Z(:,:,3)^T]^T$, $\norm{\cdot}_*$ denotes the matrix nuclear norm and $w$ is a penalty parameter, which we take equal to 1 here, for simplicity. This problem fits in the context of \eqref{prob:3opconvex}, with $f(Z)=g(Z)=\norm{Z}_*$ and  $h(Z)=\frac{1}{2}\norm{Z-Y}_2^2$. In this case, the operator $\nabla (h \circ \mathcal{A})$ is cocoercive with constant 1. With the error function $\cali R$ defined in \eqref{eq:stop_iter}, the iterations defined by \eqref{seq:inertialThreeOperator} lead to Algorithm \ref{algorithm:threeoperators}. 

\begin{algorithm}[htbp]
	\footnotesize
	\SetAlgoLined
	Choose $Z_0,Z_1\in \R^{m\times n}$, $(\lambda_k)_{k \in \N}$ and $(\alpha_k)_{k \in \N}$ such that hypotheses of Theorem \ref{T:1} are fulfilled, $\rho \in (0,2)$, $\varepsilon>0$ and $r_0>\varepsilon$ \;
	\While{$r_k>\varepsilon$ }{
		$U_k=Z_k+\alpha_k(Z_k-Z_{k-1})$\;
		$X^g_k=\prox_{\rho g}(U_k)$\;
		$Z_{k+\frac{1}{2}}=2X_k^g-U_k-\rho \mathcal{A}^*\nabla h(\mathcal{A}X_k^g)$\;
		$Z_{k+1}=U_k+\lambda_k(\prox_{\rho f}(Z_{k+\frac{1}{2}})-X_k^g)$\;
		$r_{k+1}=\mathcal{R}(Z_{k+1},Z_{k})$
	}\
	Return $Z_{n+1}, X^g_n$\;
	\caption{}
	\label{algorithm:threeoperators}
\end{algorithm}

As in the previous section, Algorithm \ref{algorithm:threeoperators} will be tested in the case $\alpha_k \equiv 0$ (the algorithm studied in \cite{davis2017three}) and, for the inertial version, 
\begin{equation}
	\alpha_k= \paren{1-\frac{1}{k}}\alpha, 
	\label{seq:alphaInpainting}
\end{equation}
where $\alpha$ satisfies the condition \eqref{E:H2_constant_gamma}. The corresponding algorithms will be referred to as original and inertial, respectively. Algorithm \eqref{algorithm:threeoperators} returns both the value of $Z_{k}$ and $X_{k}^g$, since the latter represents the image solution of the problem. Throughout this section, the initial points are both set to zero. \\

{\bf Comparison in terms of the number of erased pixels.} Between 10000 and 250000 pixels are randomly erased from the image in Figure~\ref{fig:inpainting_original} to obtain the one in Figure \ref{fig:inpainting_erased}. We compare the number of iterations and execution time needed by both methods with step size $\rho=1$ and $\lambda_k\equiv 1$, for a tolerance of $10^{-3}$. The results are shown in Figure~\ref{fig:compare_erased}. The reduction stands between 12\% and 22\% in most cases, and the improvement seems to increase with the number of erased pixels.

\begin{figure}[htb]
	\centering
	\subfloat{\includegraphics[width=0.32\textwidth,height=0.2\textwidth]{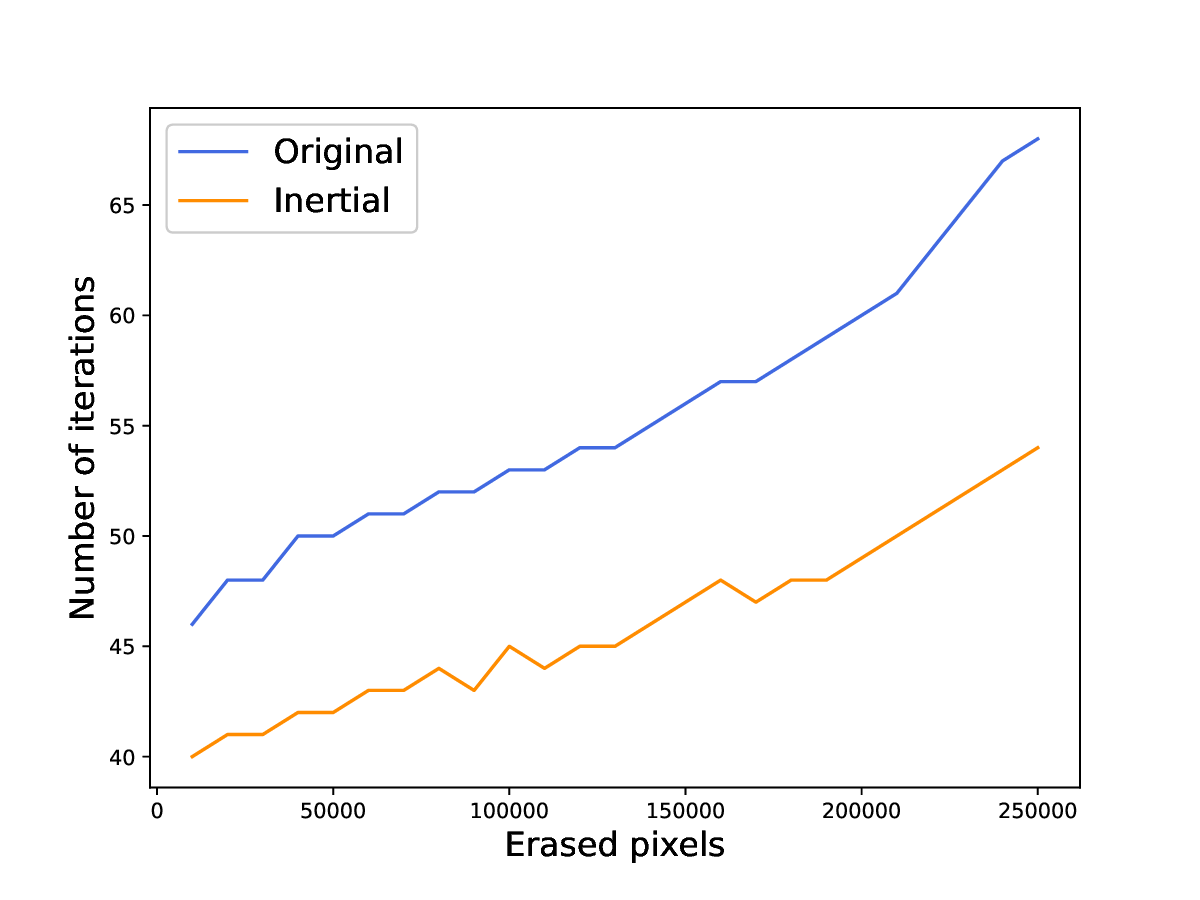}}
	\subfloat{\includegraphics[width=0.32\textwidth,height=0.2\textwidth]{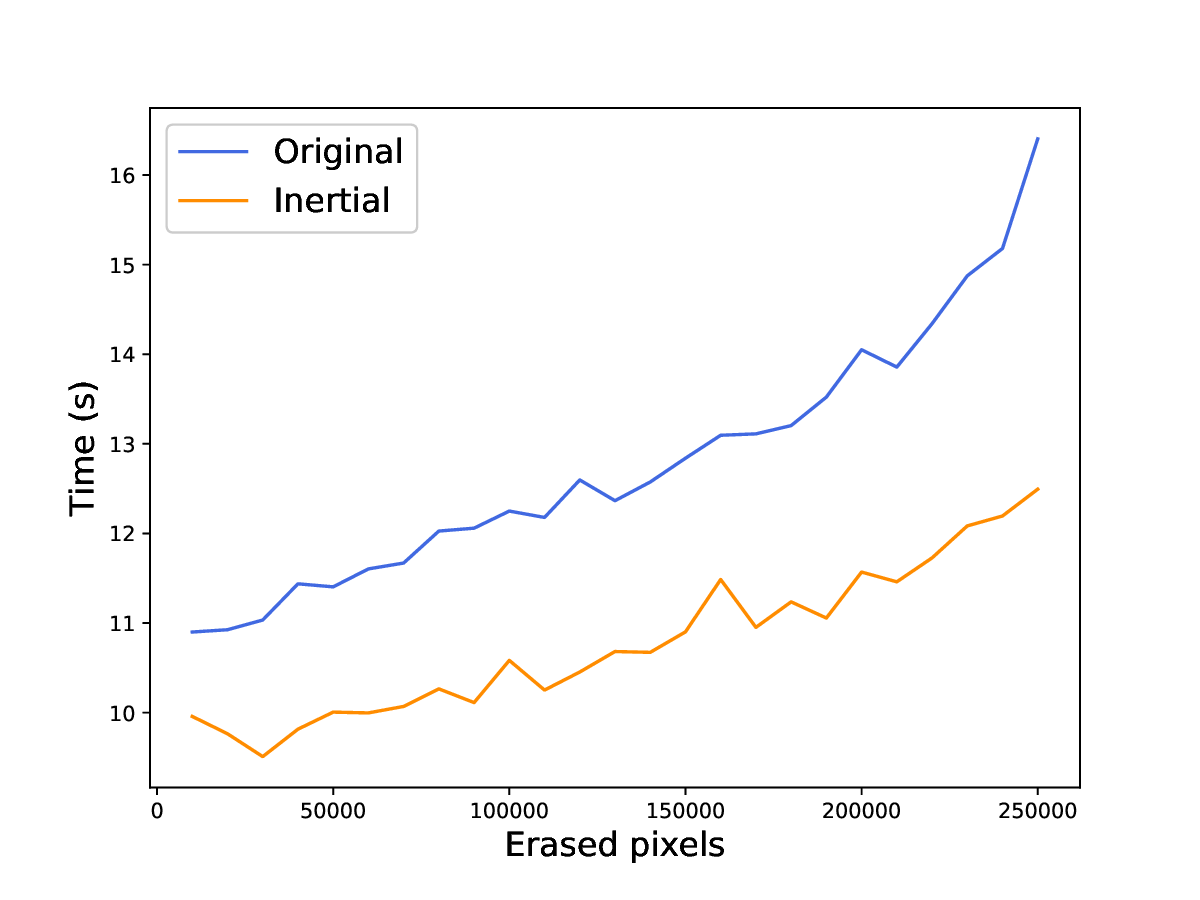}}
	\subfloat{\includegraphics[width=0.32\textwidth,height=0.2\textwidth]{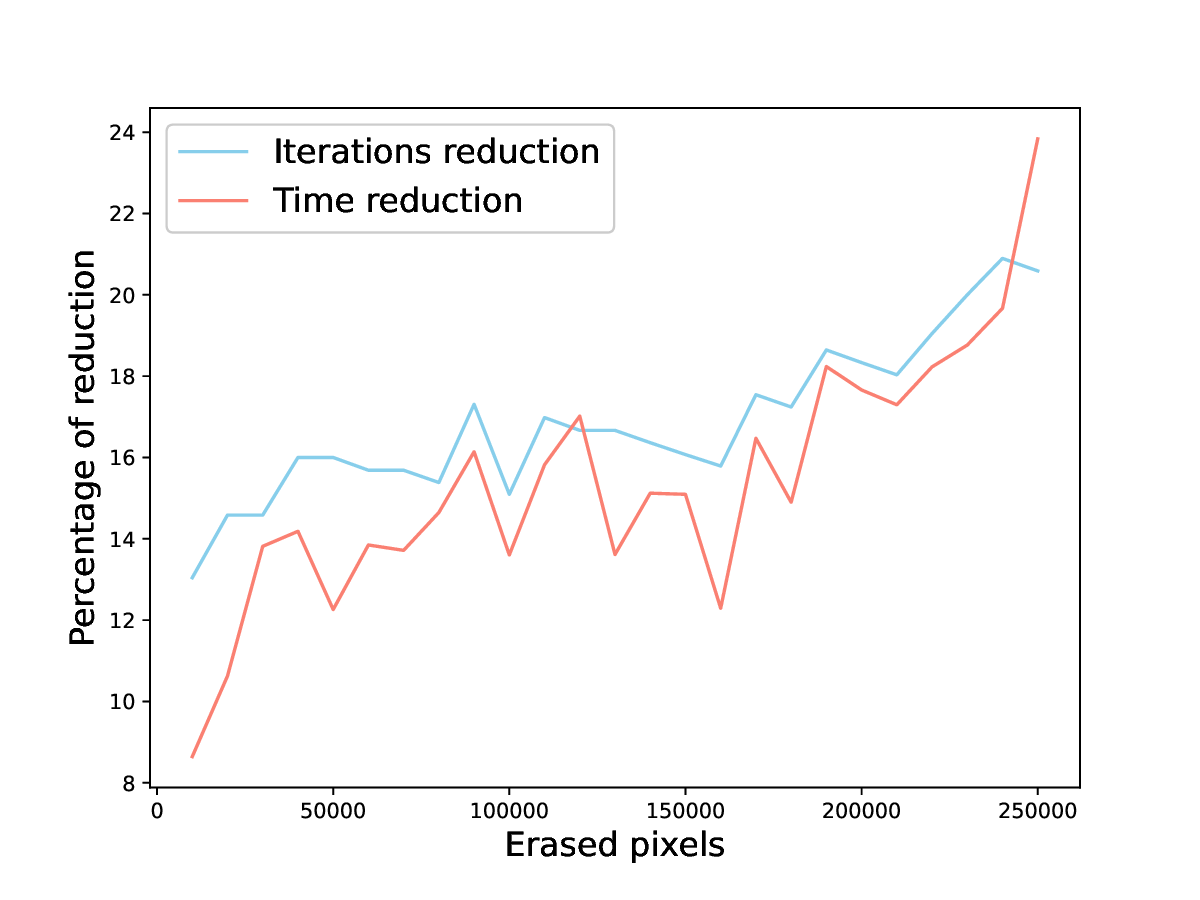}}
	\caption{Number of iterations (left), execution time (center) and percentage of reduction (right) in terms of the number of erased pixels, with step size $\rho=1$ and relaxation parameter $\lambda_k\equiv 1$, for a tolerance of $10^{-3}$.}
	\label{fig:compare_erased}
\end{figure}

{\bf Comparison in terms of the step size.} Both algorithms are tested for the same image with $250000$ randomly erased pixels for $\lambda_k\equiv 1$ and different values of the step size $\rho$. For the inertial version, the constant $\alpha$ in \eqref{seq:alphaInpainting} is adapted accordingly. The results are reported in Table~\ref{tab:gammaTest3Op} and depicted graphically in  Figure \ref{fig:compareGammas}. The percentage of reduction is noticeably higher for lower values of $\rho$ (always above 20\% when $\rho\le 1$). This is to be expected, since larger values of $\rho$ require lower values of $\alpha$, which limits the effect of inertia.

\begin{table}[h]
	\footnotesize
	\centering
	\begin{tabular}{ccccc}
		\toprule
		&  \multicolumn{2}{c}{Original algorithm} & \multicolumn{2}{c}{Inertial algorithm}\\
		$\rho$ & Time (s)  & Iterations  & Time (s) & Iterations \\
		\midrule
		0.1 & 119.80 & 524 & 70.04 & 301 \\
		0.2 & 64.25 & 281 & 39.28 & 169 \\
		0.3 & 44.61 & 195 & 28.55 & 122 \\
		0.4 & 34.88 & 150 & 22.67 & 98 \\
		0.5 & 28.20 & 123 & 19.80 & 83 \\
		0.6 & 23.90 & 104 & 17.17 & 73 \\
		0.7 & 21.13 & 91 & 15.46 & 66 \\
		0.8 & 18.46 & 81 & 14.08 & 61 \\
		0.9 & 16.74 & 74 & 13.68 & 58 \\
		1.0 & 15.81 & 69 & 13.25 & 56 \\
		1.1 & 14.87 & 65 & 12.94 & 56 \\
		1.2 & 14.60 & 64 & 13.24 & 56 \\
		1.3 & 14.34 & 63 & 13.23 & 57 \\
		1.4 & 14.67 & 64 & 13.39 & 58 \\
		1.5 & 14.55 & 64 & 13.90 & 60 \\
		\bottomrule
	\end{tabular}
	\caption{Execution time and number of iterations in terms of the step size $\rho$.}
	\label{tab:gammaTest3Op}
\end{table}

\begin{figure}[htb]
	\centering
	\subfloat{\includegraphics[width=0.3\textwidth,height=0.2\textwidth]{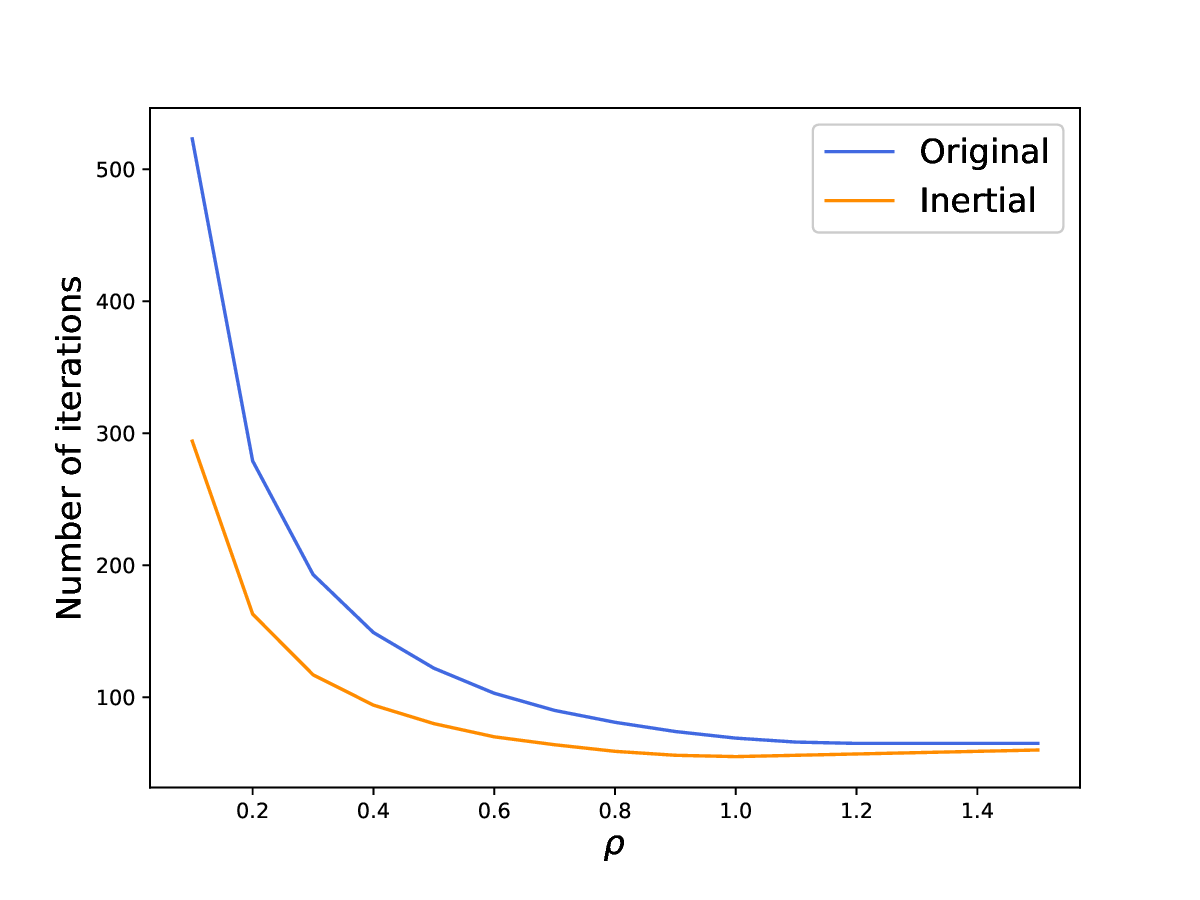}}
	\subfloat{\includegraphics[width=0.3\textwidth,height=0.2\textwidth]{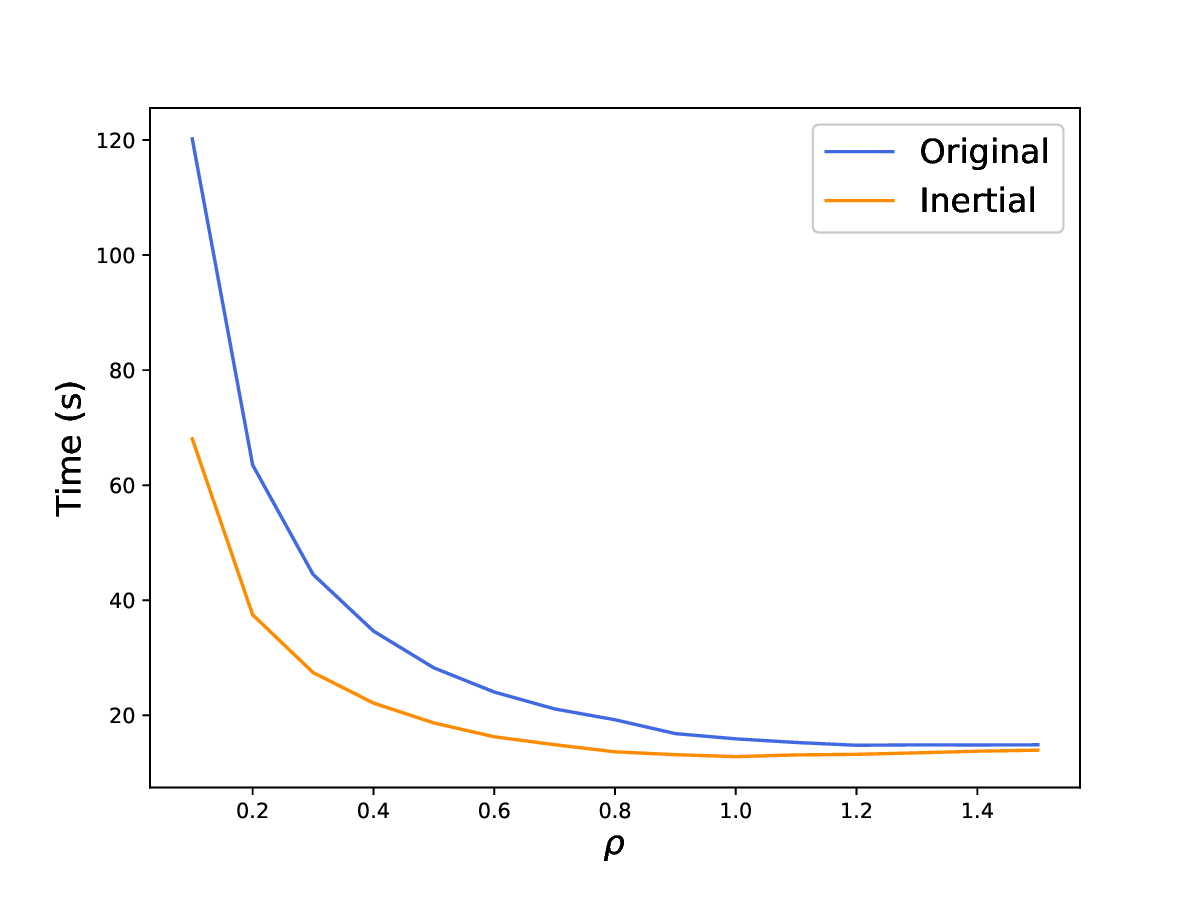}}
	\subfloat{\includegraphics[width=0.3\textwidth,height=0.2\textwidth]{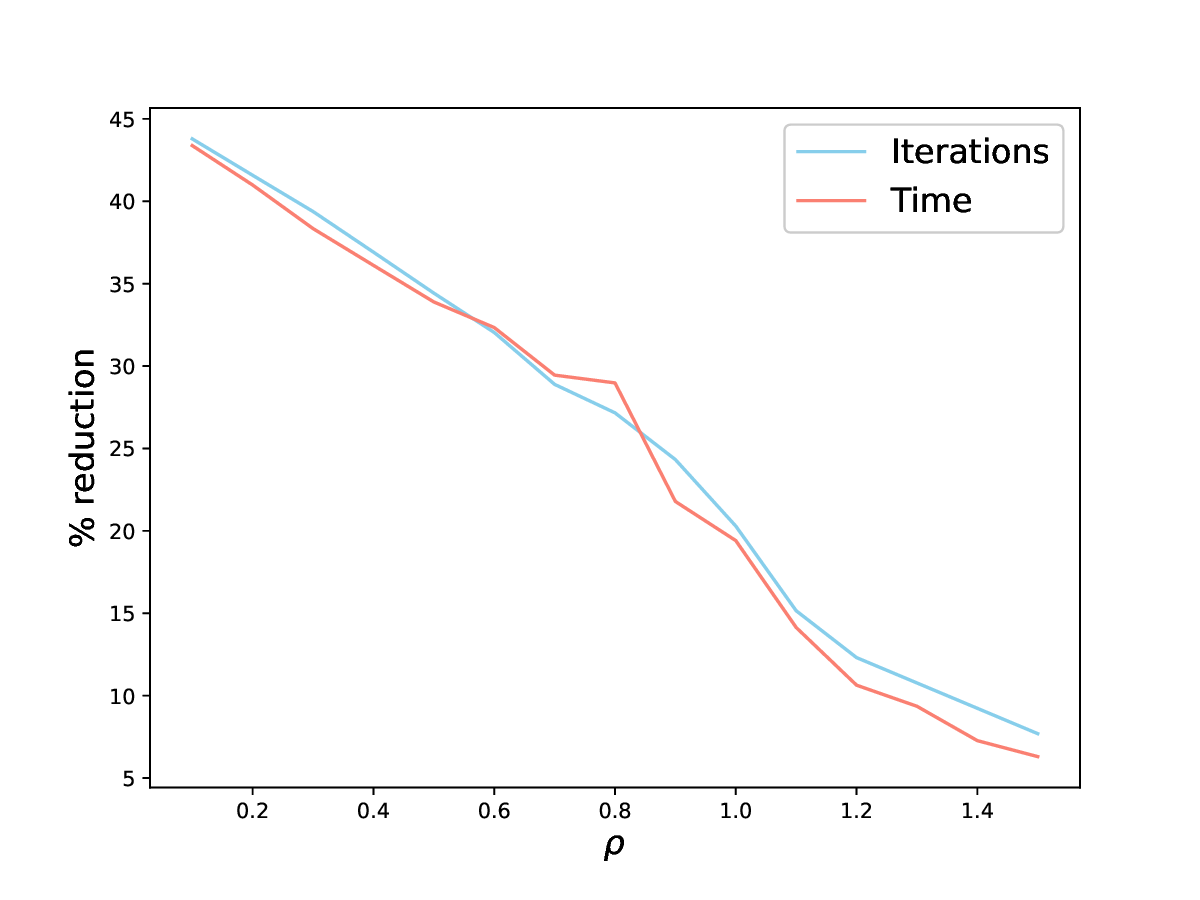}}
	\caption{Number of iterations (left), execution time (center) and percentage of reduction (right) in terms of the step size $\rho$.
	}
	\label{fig:compareGammas}
\end{figure}

{\bf Comparison in terms of the relaxation parameter.} Finally, we fix the value $\rho=1$, and compare the performance of the two methods for different values of the relaxation parameter $\lambda$, which, as before, limit the possible range for the inertial parameter $\alpha$ in view of condition \eqref{E:H2_constant_gamma}. The results are presented in Table \ref{tab:lambdaTest3Op}, and shown graphically in Figure \ref{fig:comparelambdas}. As with the step size, the reduction is greater for lower values of $\lambda$, which is consistent with the loss of the inertial character imposed by condition \eqref{E:H2_constant_gamma}. Nevertheless, observe that over-relaxing with $\lambda =1.2$ or $\lambda =1.4$ gives better results (both in number of iterations and execution time) than keeping $\lambda$ in a neighborhood of $1$.\\

\begin{table}[h]
	\footnotesize
	\centering
	\begin{tabular}{ccccc}
		\toprule
		&  \multicolumn{2}{c}{Original algorithm} & \multicolumn{2}{c}{Inertial algorithm}\\
		$\lambda$ & Time (s)  & Iterations  & Time (s) & Iterations \\
		\midrule
		0.6 & 24.47 & 108 & 13.57 & 56 \\
		0.7 & 21.28 & 94 & 11.65 & 51 \\
		0.8 & 18.67 & 83 & 12.64 & 55 \\
		0.9 & 16.94 & 75 & 12.76 & 56 \\
		1.0 & 15.52 & 69 & 12.76 & 56 \\
		1.1 & 14.28 & 63 & 12.51 & 55 \\
		1.2 & 13.35 & 59 & 12.53 & 54 \\
		1.3 & 12.52 & 55 & 11.90 & 52 \\
		1.4 & 12.04 & 52 & 11.71 & 51 \\
		\bottomrule
	\end{tabular}
	\caption{Execution time and number of iterations for different values of $\lambda$.}
	\label{tab:lambdaTest3Op}
\end{table}

\begin{figure}[h]
	\centering
	\subfloat{\includegraphics[width=0.3\textwidth,height=0.2\textwidth]{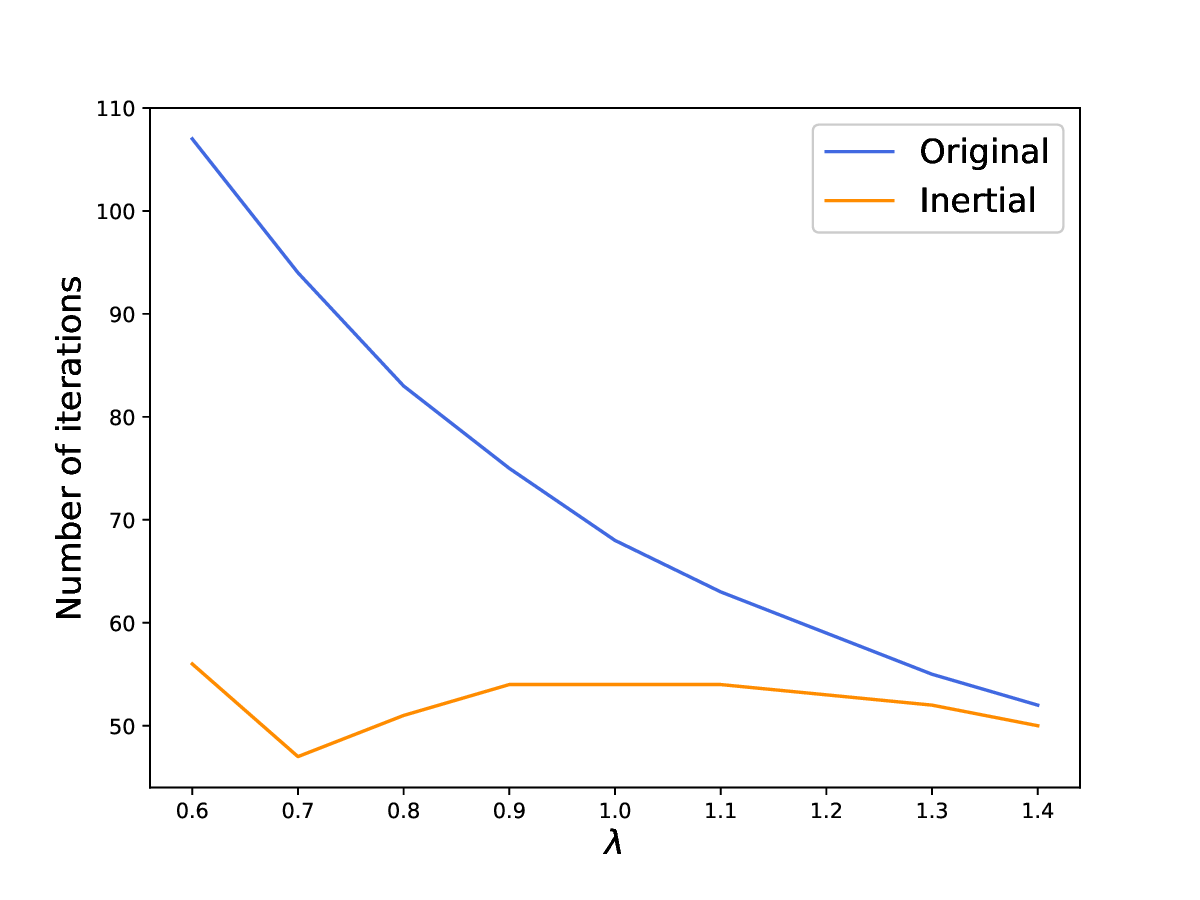}}
	\subfloat{\includegraphics[width=0.3\textwidth,height=0.2\textwidth]{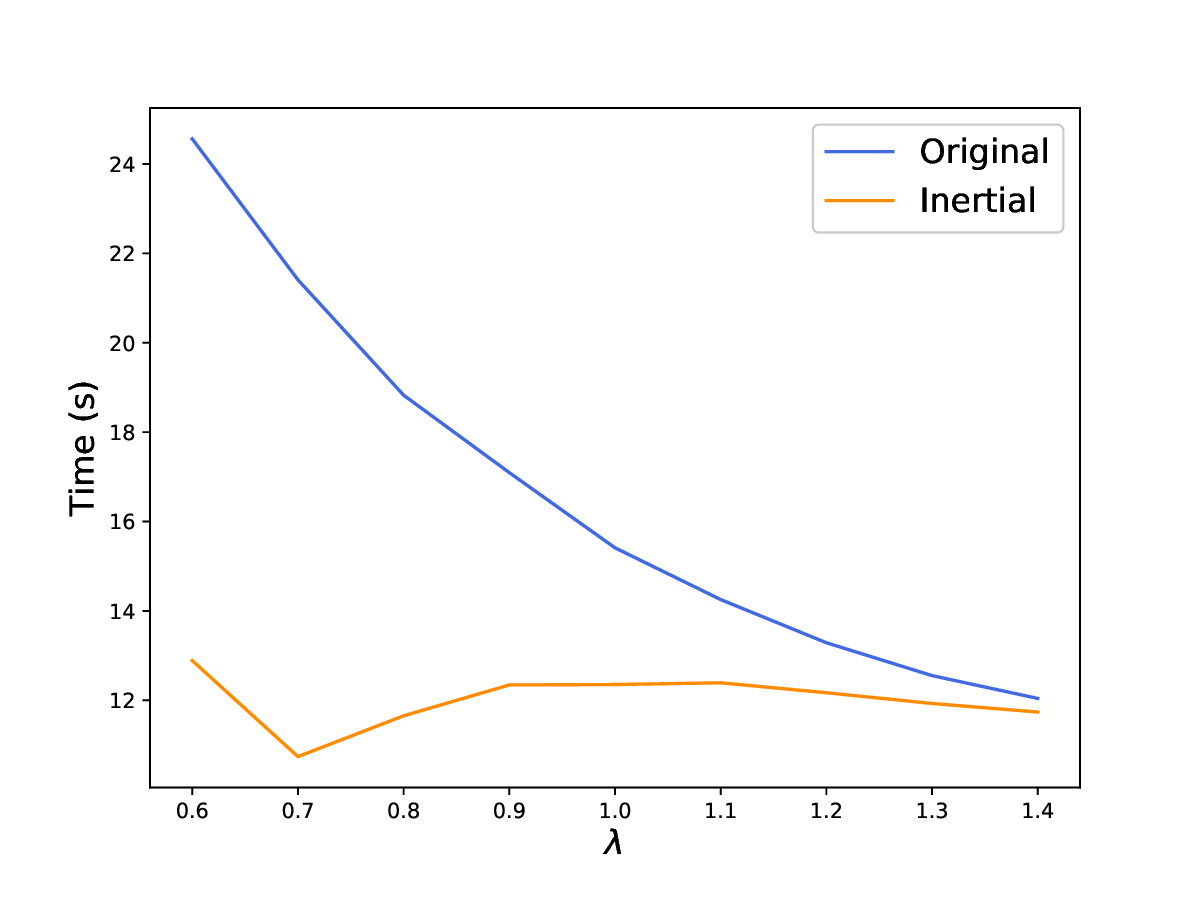}}
	\subfloat{\includegraphics[width=0.3\textwidth,height=0.2\textwidth]{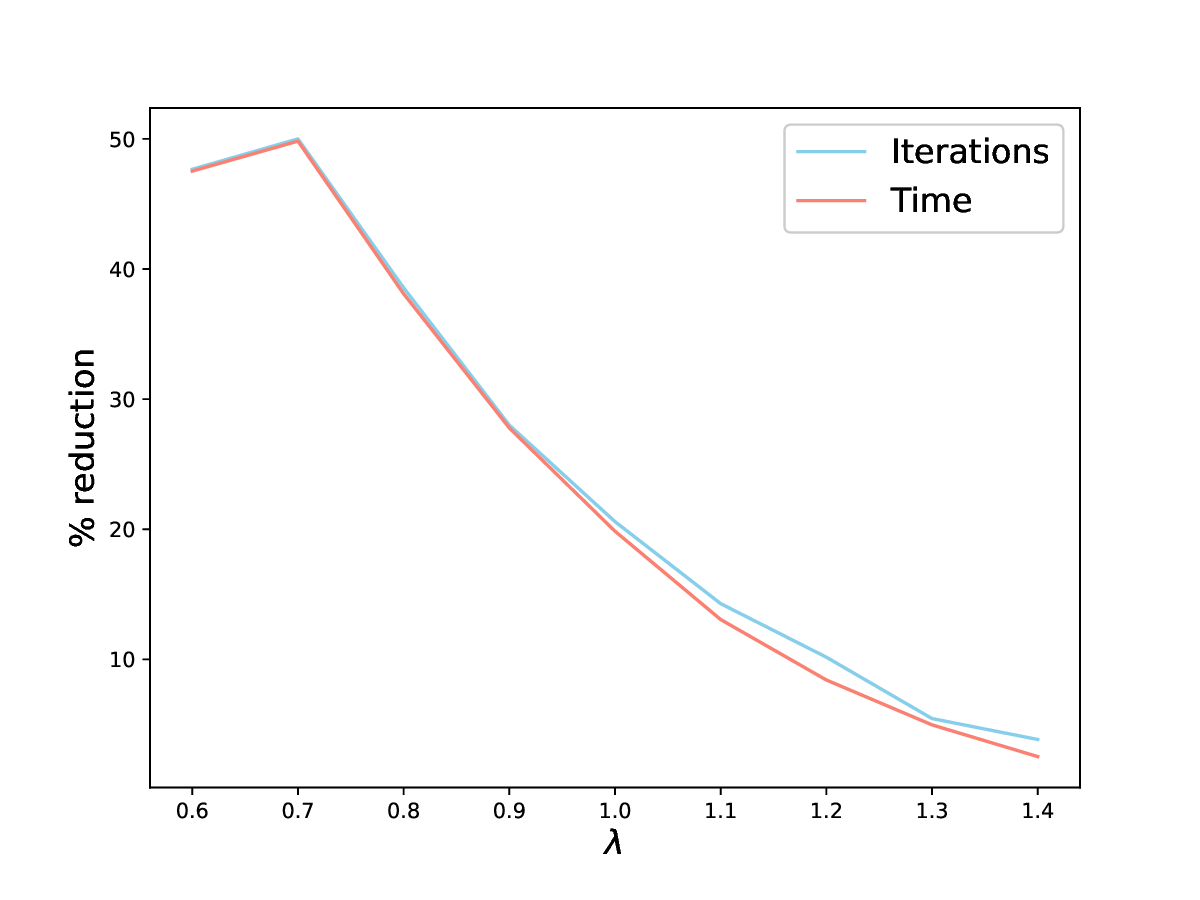}}
	\caption{Number of iterations (left), execution time (center) and percentage of reduction (right) in terms of the relaxation parameter $\lambda$.}
	\label{fig:comparelambdas}
\end{figure}

The evolution of the function values, the distance to the limit and the residuals are shown (in logarithmic scale) in Figure \ref{fig:3op_norm_values} for 250000 erased pixels, using $\rho=1$ and $\lambda_k\equiv 1$. As in the previous example, the sequence $k\norm{z_k - Tz_k}^2$ tends to zero, allowing us to conjecture again an asymptotic rate of $o(1/k)$. Finally, Figure \ref{fig:edificios} shows the original, corrupted (with 250000 erased pixels) and recovered images.\footnote{For the sake of a fair visual comparison, we follow the implementation used in \cite{davis2017three}, as described in \href{https://damek.github.io/ThreeOperators.html}{https://damek.github.io/ThreeOperators.html}, which differs slightly from the description given in Section \ref{section:3op} in that it contains a Bregman update.} 

\begin{figure}[!htbp]
	\centering
	\includegraphics[scale=0.3]{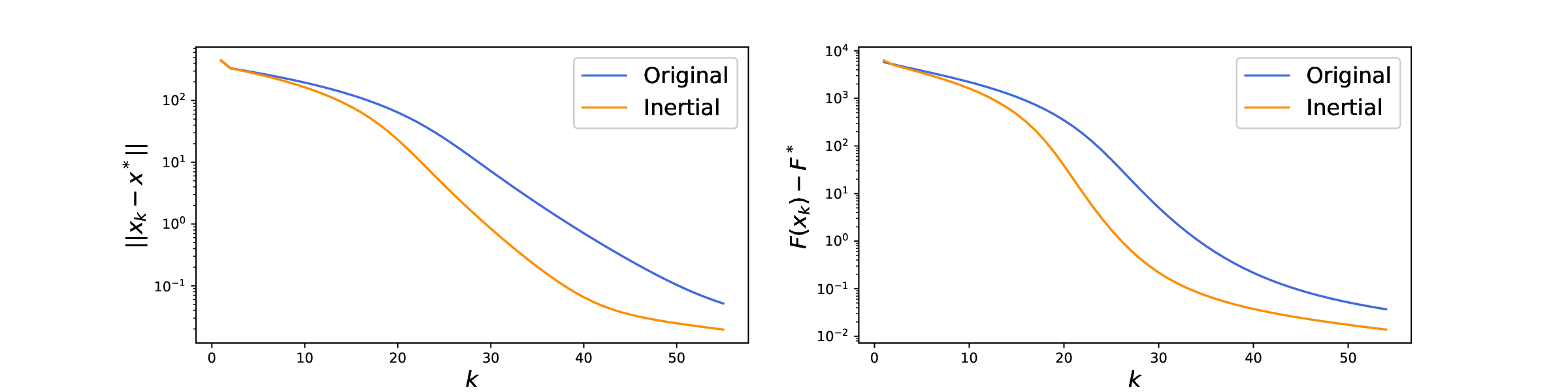}
	\includegraphics[scale=0.3]{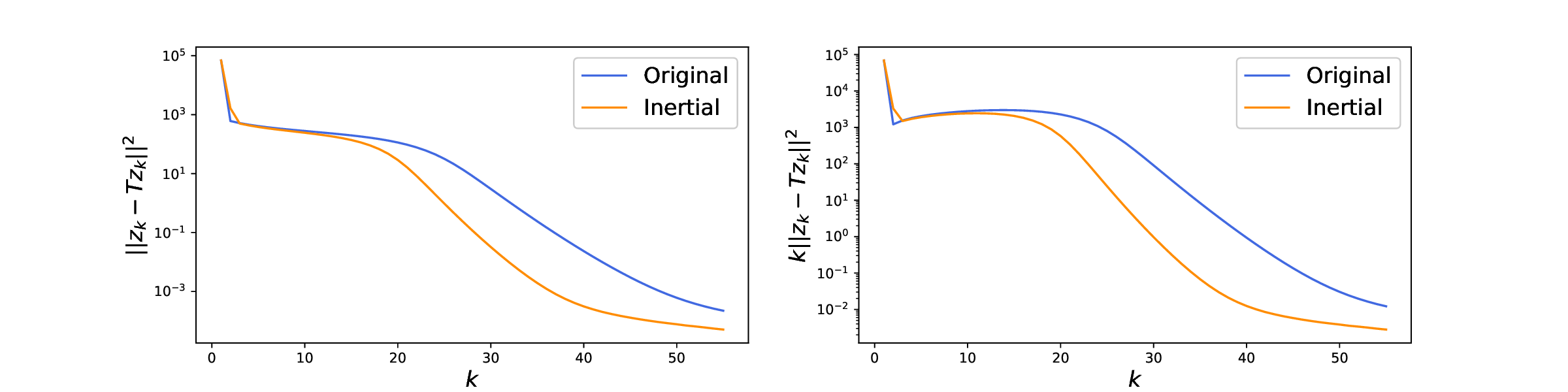}
	\caption{Evolution to the distance to the computed solution (top left), objective function values (top right), residuals $\norm{z_k - Tz_k}^2$ (bottom left) and $k\norm{z_k - Tz_k}^2$ (bottom right), for 250000 erased pixels using $\rho=1$ and $\lambda_k\equiv 1$.}
	\label{fig:3op_norm_values} 
\end{figure}

\begin{figure}[htbp]
	\centering
	\subfloat[Original image]{\includegraphics[width=0.22\textwidth]{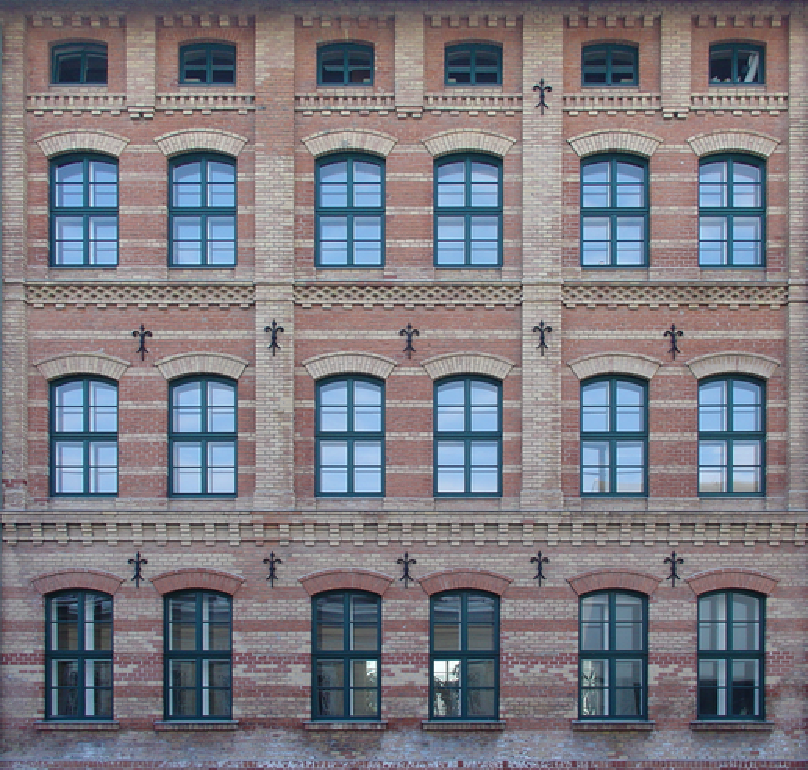} \label{fig:inpainting_original}}    
	\subfloat[Corrupted image]{\includegraphics[width=0.22\textwidth]{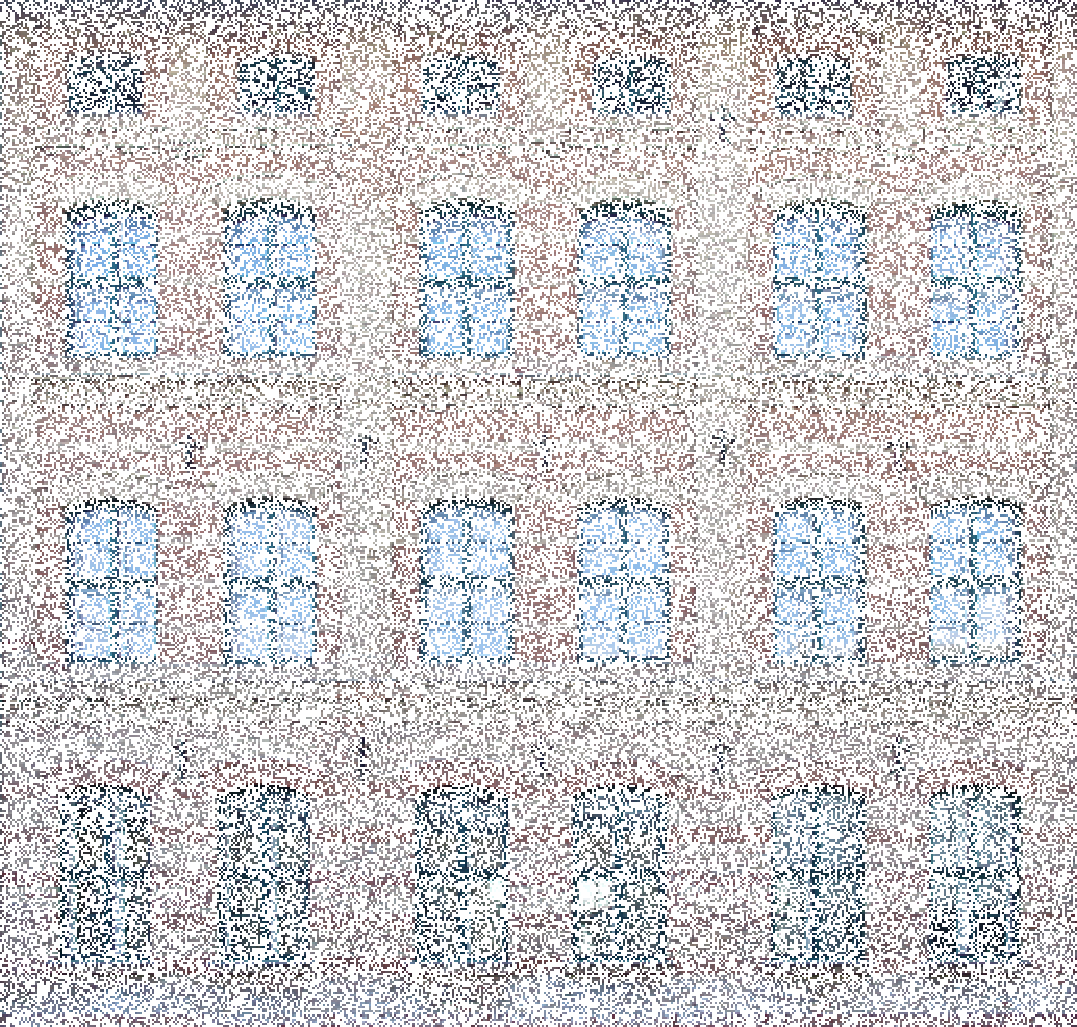}\label{fig:inpainting_erased}}  \hspace{0.1pt} 
	\subfloat[Recovered without inertia]{\includegraphics[width=0.22\textwidth]{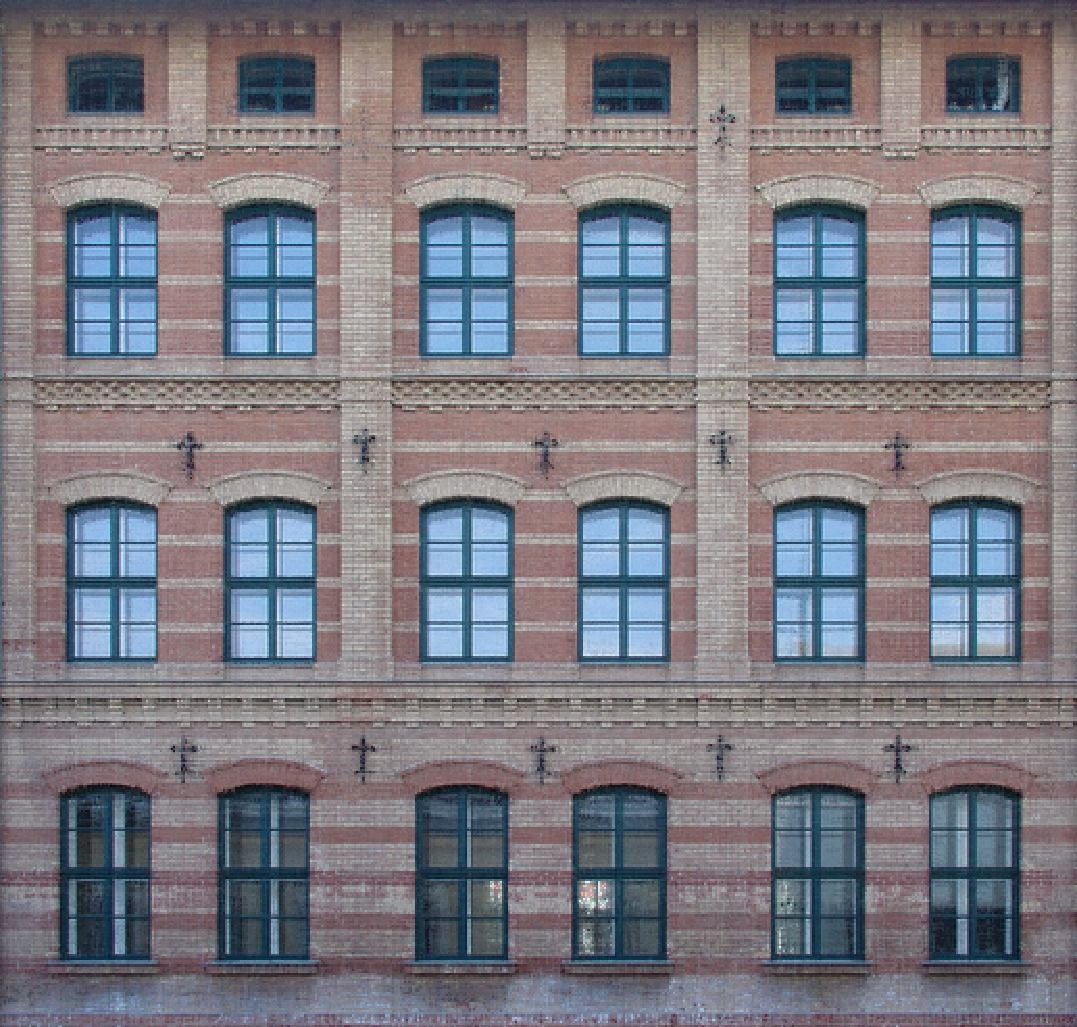}\label{fig:inpainting_recovered_orig}} 
	\hspace{0.1pt}
	\subfloat[Recovered with inertia]{\includegraphics[width=0.22\textwidth]{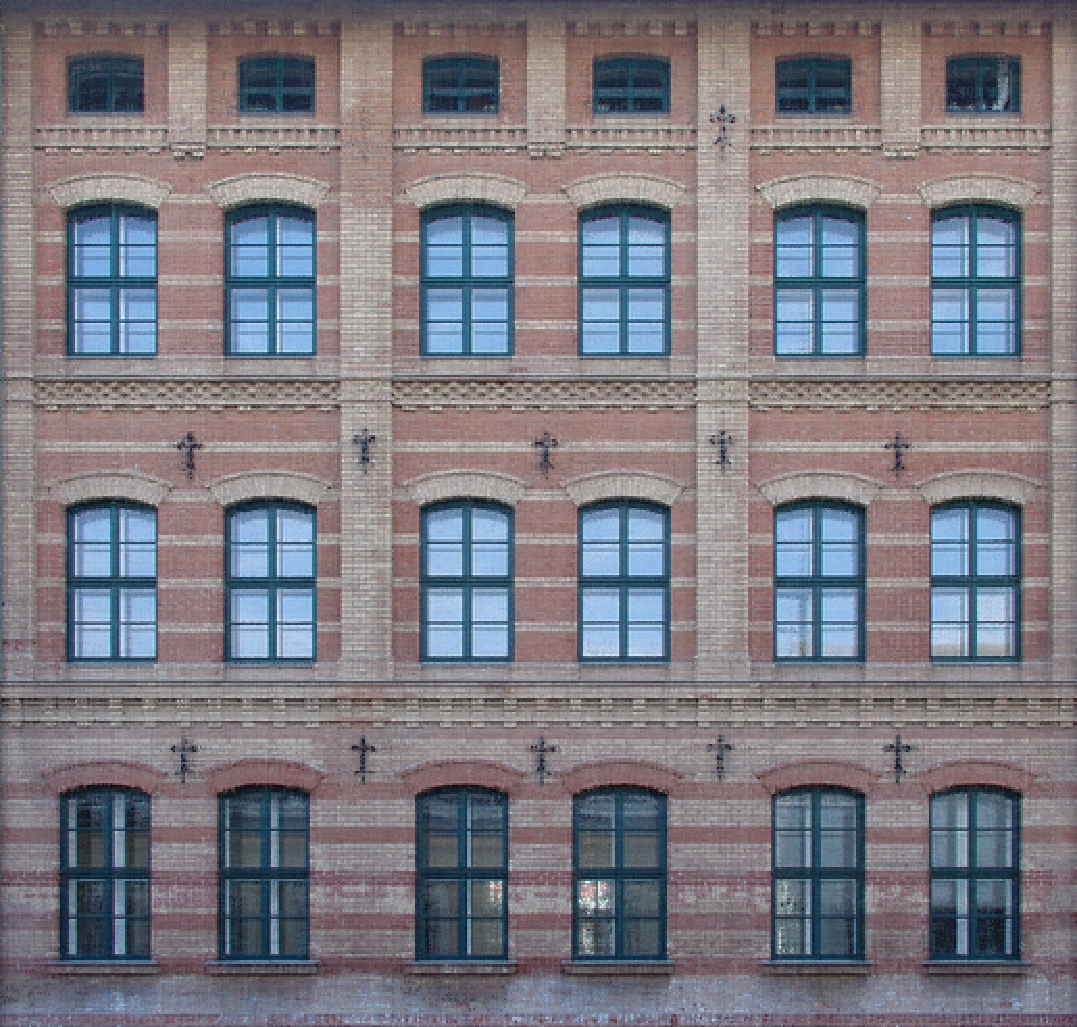}\label{fig:inpainting_recovered_iner}}
	\caption{Original image (a), corrupted image with 250000 randomly erased pixels (b), images recovered without inertia (c), and with inertia (d).}
	\label{fig:edificios}
\end{figure}

\if{ 
	\underline{\textbf{Bregman Iteration:}} As this experiment is a replication of one of the numerical simulations performed in \cite{davis2017three}, it is expected to obtain similar results. Considering the codes of Damek Davis and Wotao Yin\footnote{Scripts obtained from \href{https://damek.github.io/ThreeOperators.html}{https://damek.github.io/ThreeOperators.html}.} and the results presented in the publication, it can be seen that the obtained image is better than the one obtained in this work. This is because in that work, the coding of the Algorithm uses a Bregman update for the observation $Y$. That is, considering an observation $Y_k$. Then, the function $h$ changes at each iteration and also the operator $\nabla h$. This update improves the quality of the reconstructed image, but in this work that step was ignored for the purpose of defining the algorithm with the structure on \eqref{E:Algorithm}. Also, the aim of this experiment is to contrast the performance of the Inertial setting against the original in terms of the number of iterations and time, the modeling of the problem and generating the best possible image is beyond the scope of this paper. For a more  detailed description of the Bregman iterations and the modeling of the in painting problem, see \cite{liu2012tensor} and \cite{ma2011fixed}.   \\
	Figure \ref{fig:comp_Bregman} shows the difference between the images obtained by the Algorithm with and without Bregman update. The first experiment showed in this section is performed again, but with the algorithms using the Bregman update. Figure \ref{fig:Bregman_compare_erased} shows that using this heuristic, the inertial algorithm also outperforms the original version. 
	
	\begin{figure}[htbp]
		\centering
		\subfloat[Algorithm \eqref{algorithm:threeoperators} with $\alpha_k \equiv 0$. ]{\includegraphics[width=0.24\textwidth]{images/recovered_original.eps} }
		\subfloat[Algorithm \eqref{algorithm:threeoperators} with $\alpha_k \equiv 0$ and Bregman update.]{\includegraphics[width=0.24\textwidth]{images/recovered_original_orig.eps}}  
		\subfloat[Inertial Algorithm \eqref{algorithm:threeoperators} . ]{\includegraphics[width=0.24\textwidth]{images/recovered_inertial.eps} }
		\subfloat[Inertial Algorithm \eqref{algorithm:threeoperators} and Bregman update.]{\includegraphics[width=0.24\textwidth]{images/recovered_inertial_orig.eps}}
		\caption{Comparison of the recovered image obtained in this work vs. the one obtained in \cite{davis2017three}}
		
		\label{fig:comp_Bregman}
	\end{figure}
	
	\begin{figure}[htbp]
		\centering
		\subfloat[Number of iterations]{\includegraphics[width=0.33\textwidth]{images/Bregman_erased-image_iter.eps}}
		\subfloat[Time in seconds]{\includegraphics[width=0.33\textwidth]{images/Bregman_erased-image_time.eps}}
		\subfloat[Percentage of reduction in iterations performed by the inertial algorithm. ]{\includegraphics[width=0.33\textwidth]{images/Bregman_erased-image_reduction.eps}}
		\caption{Comparison between the original and the inertial algorithm for the three operators splitting scheme, using a tolerance of $\varepsilon=10^{-3}$, $\rho=1$, $\lambda=1$ for different amounts of erased pixels and using the Bregman update.}
		\label{fig:Bregman_compare_erased}
	\end{figure}

	\clearpage

}\fi

	The datasets generated during and/or analysed during the current study are available from the corresponding author upon request.
	\bibliographystyle{abbrv}
	\bibliography{referencias}
	
\end{document}